\definecolor{red-new}{rgb}{1,0,0}
\definecolor{dark-blue}{rgb}{0,0,1}
\definecolor{light-blue}{rgb}{0,0.6,0.6}
\definecolor{gray-new}{rgb}{0.3,0.3,0.3}
\newcommand{\xleftrightarrow}[2][]{\ext@arrow 3359\leftrightarrowfill@{#1}{#2}}
\newcommand{\xdasharrow}[2][->]{
% correct vertical setting by egreg:
% http://tex.stackexchange.com/a/59660/13304
\tikz[baseline=-\the\dimexpr\fontdimen22\textfont2\relax]{
\node[anchor=south,font=\scriptsize, inner ysep=1.5pt,outer xsep=2.2pt](x){#2};
\draw[shorten <=3.4pt,shorten >=3.4pt,dashed,#1](x.south west)--(x.south east);
}
}
\newcommand{\DEBUG}{}
  \def\rem#1{{\marginpar{\raggedright\scriptsize #1}}}
  \newcommand{\pmr}[1]{\rem{\color{blue}{$\bullet$ #1}}}
  \newcommand{\ppr}[1]{\rem{\color{red}{$\bullet$ #1}}}
  \newcommand{\ppr}[1]{}
  \newcommand{\pmr}[1]{}
\theoremstyle{plain}
\newtheorem{theorem}{Theorem}
\newtheorem{lemma}{Lemma}
\newtheorem{fact}{Fact}
\theoremstyle{definition}
\newtheorem{remark}{Remark}
\begin{document}

\title
[Randomized implicit RK2 schemes]
{Convergence and stability of randomized implicit two-stage Runge-Kutta schemes}

\author[T. Bochacik]{Tomasz Bochacik}
\address{AGH University of Krakow,
Faculty of Applied Mathematics,\newline
Al. A.~Mickiewicza 30, 30-059 Krak\'ow, Poland}
\email{bochacik@agh.edu.pl, corresponding author}

\author[P. Przyby{\l}owicz]{Pawe{\l} Przyby{\l}owicz}
\address{AGH University of Krakow,
Faculty of Applied Mathematics,\newline
Al. A.~Mickiewicza 30, 30-059 Krak\'ow, Poland}
\email{pprzybyl@agh.edu.pl}

\begin{abstract}
We randomize the implicit two-stage Runge-Kutta scheme in order to improve the rate of convergence (with respect to a deterministic scheme) and stability of the approximate solution (with respect to the solution generated by the explicit scheme). For stability analysis, we use Dahlquist's concept of A-stability, adopted to randomized schemes by considering three notions of stability: asymptotic, mean-square, and in probability. The randomized implicit RK2 scheme proves to be A-stable asymptotically and in probability but not in the mean-square sense. 
\newline
\newline
\textbf{Key words:} randomized schemes, implicit schemes, Runge-Kutta schemes, error bounds, A-stability, mean-square stability, asymptotic stability, stability in probability
\newline
\newline
\textbf{MSC 2010:} 65C05,\ 65C20,\ 65L05,\ 65L06,\ 65L70
\end{abstract}
\maketitle
\tableofcontents
%%%%%%%%%%%%%%%%%%%%%%%%%%%%%%%%%%%%%%%%%%%%%%%%%%%%%%%%%%%%%%%%%%%%
%%%%%%%%%%%%%%%%%%%%%%%%%%%%%%
%%%%%%%%%%%%%%%%%%%%%%%%%%%%%%

In this paper, we investigate two randomized implicit schemes from the two-stage Runge-Kutta family. The study of randomized algorithms approximating the solutions of initial value
problems for ODEs dates back to the early 1990s, see \cite{stengle1, stengle2}. There is a rich literature on error bounds for randomized explicit methods, including Euler, Taylor, and Runge-Kutta schemes \cite{randRK, daun1, HeinMilla, JenNeuen, Kac1, KruseWu_1} but just a few papers dealing with implicitness are available, see \cite{randEuler, backward_euler} where the randomized Euler scheme has been investigated. This paper aims to provide insights on higher-order randomized implicit methods. In section \ref{sec:prel}, we propose two ways of randomizing the implicit two-stage Runge-Kutta scheme. In the next sections, we deal with the convergence and A-stability of these schemes. 

From existing literature, it is known that randomization helps to achieve a better rate of convergence of a numerical scheme. Under standard assumptions (H\"older continuity in time and Lipschitz continuity in space), the rate of convergence typically improves by $1/2$ (additively) in comparison to the deterministic scheme. This general observation, verified by various authors for many other randomized schemes, holds also for the two schemes considered in this paper. The details are included in section \ref{sec:error}.

The main focus of this paper is stability. In section \ref{sec:stability}, we investigate the behavior of the randomized implicit two-stage Runge-Kutta scheme when applied to the famous Dahlquist's test equation, see \cite{dahlquist}. This simple test problem has been found particularly useful in the stability analysis for methods approximating solutions of ODEs. For details, we refer the reader to \cite{butcher, ambrosio, hairer}, where the concept of A-stability is comprehensively covered for deterministic schemes, with a particular focus on Runge-Kutta methods.

In the 1990s, the first papers on the stability of random numerical schemes were published, see for example \cite{higham1, higham2, mitsui}, however only for stochastic differential equations. In these papers, regions of mean-square and asymptotic stability were considered, by analogy to the concept of absolute stability region but with specifying in which probabilistic sense the approximated solution tends to $0$.

Surprisingly, for a long time, the stability of randomized methods for ODEs had not attracted attention. The first steps were made in \cite{taylor, randRK, randEuler}, where the notions of mean-square and asymptotic stability were adapted from the SDEs case to the randomized schemes for ODEs. Additionally, stability in probability has been considered. So far, these three types of probabilistic stability have been characterized for the family of randomized Taylor schemes (including the randomized explicit RK2 scheme) and for randomized Euler schemes. The current paper, according to our best knowledge, is the first effort toward stability analysis for higher-order implicit randomized schemes. 

In section \ref{sec:stability}, we show that:
\begin{itemize}
    \item stability regions for both schemes considered in this paper coincide;
    \item the schemes are asymptotically A-stable and A-stable in probability;
    \item however, they are not mean-square A-stable.
\end{itemize}
The mean-square stability region is further investigated and its basic properties are formally proven. We note that stability analysis for the randomized explicit two-stage Runge-Kutta scheme has been performed in \cite{randRK}. We show that the implicit algorithms considered in this paper outperform that scheme in terms of stability. This was expected based on well-known results for deterministic schemes. Finally, we compare the performance of the randomized implicit and semi-implicit RK2 schemes to their deterministic counterparts. In section \ref{sec:experiments} we conduct a numerical experiment, where the schemes are applied to a sample stiff problem. We observe that trajectories generated by the randomized schemes are contaminated by some noise (contrary to the deterministic approximation) but the error does not explode. We think that this behavior can be linked to lacking the mean-square A-stability and having the asymptotic A-stability property.

The main results of the paper are summarized in section \ref{sec:conclusions}. We discuss also related topics and hypotheses that may be considered for further research.

%%%%%%%%%%%%%%%%%%
\section{Preliminaries} \label{sec:prel}

\subsection{Initial problem}

We deal with initial value problems of the following form: 
\begin{equation*}
		\left\{ \begin{array}{ll}
			z'(t)= f(t,z(t)), \ t\in [a,b], \\[2pt]
			z(a) = \eta, 
		\end{array}\right.
\end{equation*}
where $-\infty < a < b < \infty$, $\eta\in\mathbb{R}^d$, $f\colon [a,b]\times\mathbb{R}^d\to\mathbb{R}^d$, $d\in\mathbb{Z}_+$. 

\subsection{Schemes}

Let $n\in\mathbb{Z}_+$ and $h=\frac{b-a}{n}$, $t_j = a+jh$ for $j\in\{0,1,\ldots,n\}$. Moreover, let $\tau_1,\tau_2,\ldots$ be independent random variables uniformly distributed on the interval $[0,1]$, defined on a common probability space $(\Omega,\Sigma,\mathbb{P})$, where the $\sigma$-algebra $\Sigma$ is complete. For $j\in\{1,\ldots,n\}$, let $\theta_j = t_{j-1} + h\tau_j$. We consider also a filtration $(\mathcal{F}_j)_{j=0}^\infty$ generated by random variables $\tau_1,\tau_2,\ldots$. Specifically, $\mathcal{F}_0 = \sigma(N)$, where $N=\{ A\in\Sigma\colon \mathbb{P}(A)=0\}$, and $\mathcal{F}_j = \sigma\bigl( \sigma(\tau_1,\ldots,\tau_j), N \bigr)$. 

The classical deterministic two-stage Runge-Kutta scheme (also referred to as the midpoint scheme) is given by 
\begin{equation} \label{eq:detRK2}
    \left\{ \begin{array}{l}
			 V^0 = \eta,\\[2pt]
			 V_{1/2}^j =  V^{j-1} +\frac12 h   f \bigl(t_{j-1},   V^{j-1}\bigr), \ j\in\{1,\ldots,n\}, \\[2pt]
			 V^j =  V^{j-1} + h  f(t_{j-1}+\frac12h, V_{1/2}^j), \ j\in\{1,\ldots,n\}.
		\end{array}\right. \tag{det RK2}
\end{equation}
In \cite{HeinMilla, KruseWu_1, randRK}, its randomized version (the randomized explicit two-stage Runge-Kutta scheme or the random point scheme) has been considered:
\begin{equation} \label{eq:explRK2}
    \left\{ \begin{array}{l}
			 V^0 = \eta,\\[2pt]
			 V_{\tau}^j =  V^{j-1} +\tau_j h   f \bigl(t_{j-1},   V^{j-1}\bigr), \ j\in\{1,\ldots,n\}, \\[2pt]
			 V^j =  V^{j-1} + h  f(\theta_j, V_{\tau}^j), \ j\in\{1,\ldots,n\}.
		\end{array}\right. \tag{rand expl RK2}
\end{equation}
Both these schemes are explicit. The rate of convergence of \eqref{eq:explRK2} was shown to be $\varrho+\frac12$ (where $\varrho$ is the H\"older exponent in time) in \cite{HeinMilla, KruseWu_1}. Stability regions of \eqref{eq:detRK2} and \eqref{eq:explRK2} have been characterized and compared in \cite{randRK}.

In the current paper, we deal with randomized two-stage Runge-Kutta schemes that involve implicitness. We will investigate the following two schemes:
\begin{enumerate}
    \setlength{\itemsep}{2pt}
    \item[I.] \textit{Semi-implicit randomized RK2 scheme}, where the intermediate step is implicit:
        \begin{equation} \label{eq:S1}
        \left\{ \begin{array}{l}
            V^0 = \eta,\\[2pt]
            V_{\tau}^j =  V^{j-1} +\tau_j h   f (\theta_{j},   V_\tau^{j}), \ j\in\{1,\ldots,n\}, \\[2pt]
            V^j =  V^{j-1} + h  f(\theta_j, V_{\tau}^j), \ j\in\{1,\ldots,n\}.
        \end{array}\right. \tag{S$1$}
        \end{equation}
        The semi-implicit randomized RK2 scheme is conceptually similar to the explicit randomized RK2 scheme. Both schemes use the Euler-style approximation with step size $\tau_jh$ as an intermediate step: explicit Euler is employed for randomized explicit RK2 and implicit Euler for randomized semi-implicit RK2.
    \item[II.] \textit{Implicit randomized RK2 scheme}, where in $j$-th iteration, the slope is calculated at a~point chosen randomly from the segment joining $(t_{j-1},V^{j-1})$ and $(t_j, V^j)$:
        \begin{equation} \label{eq:S2}
        \left\{ \begin{array}{l}
            V^0 = \eta,\\[2pt]
            V^j =  V^{j-1} + h  f(\theta_j, (1-\tau_j)V^{j-1}+\tau_jV^j), \ j\in\{1,\ldots,n\}.
        \end{array}\right. \tag{S$2$}
        \end{equation}
\end{enumerate}
Sometimes both of the above schemes will be jointly referred to as randomized implicit RK2 schemes. Their deterministic counterparts can be recovered by setting $\tau_j=\frac12$ for all $j\in\mathbb{Z}_+$ in \eqref{eq:S1}:
    \begin{equation} \label{eq:detS1}
        \left\{ \begin{array}{l}
            V^0 = \eta,\\[2pt]
            V_{1/2}^j =  V^{j-1} +\frac12 h   f (t_{j-1}+\frac{h}{2},   V_{1/2}^{j}), \ j\in\{1,\ldots,n\}, \\[2pt]
            V^j =  V^{j-1} + h  f(t_{j-1}+\frac12h, V_{1/2}^j), \ j\in\{1,\ldots,n\}
        \end{array}\right. \tag{det S$1$}
        \end{equation}
and in \eqref{eq:S2}:
        \begin{equation} \label{eq:detS2}
        \left\{ \begin{array}{l}
            V^0 = \eta,\\[2pt]
            V^j =  V^{j-1} + h  f(t_{j-1}+\frac{h}{2}, \frac12 V^{j-1}+\frac12 V^j), \ j\in\{1,\ldots,n\}.
        \end{array}\right. \tag{det S$2$}
        \end{equation}
        
Whenever there is a risk of confusion, we will add a subindex to $V$ indicating which scheme is considered.

\section{Error bounds} \label{sec:error}

\subsection{Class of initial value problems}

We consider a class $F^\varrho =F^\varrho (a,b,d,K,L)$ of pairs $\left(\eta,f\right)$ satisfying the following conditions:
\begin{description}
\setlength\itemsep{4pt}
\item[A0\label{itm:A0}] $\left\|\eta\right\|\leq K$, 
\item[A1\label{itm:A1}] $f\in \mathcal{C}\left([a,b]\times\mathbb{R}^d\right)$,
\item[A2\label{itm:A2}] $\| f(t,x)\| \leq K\left(1+\left\|x\right\|\right)$ for all $(t,x)\in [a,b]\times\mathbb{R}^d$,
\item[A3\label{itm:A3}] $\| f(t,x) - f(s,x) \|\leq L|t-s|^\varrho$ for all $t,s\in [a,b], x\in \mathbb{R}^d$,
\item[A4\label{itm:A4}] $\| f(t,x) - f(t,y) \|\leq L\|x-y\|$ for all $t\in [a,b], x,y \in \mathbb{R}^d$,
\end{description}
where $\varrho\in (0,1]$ and $K,L \in (0,\infty)$. 

\subsection{Existence of the solution}

Each iteration of schemes \eqref{eq:S1} and \eqref{eq:S2} requires solving an equation for $V_\tau^j$ or $V^j$, respectively. We will show that for sufficiently small $h$, the equation has a solution and thus the scheme is well-defined. 

\begin{lemma} \label{lm:s1}
    Let $\left(\eta,f\right)\in F^\varrho$ and let us assume that $Lh<1$. Then for every $j\in\{1,\ldots,n\}$  there exists a solution $V_\tau^j$ to the equation
    $$V_{\tau}^j =  V_{\text{S}1}^{j-1} +\tau_j h   f (\theta_{j},   V_\tau^{j}),$$
    cf. \eqref{eq:S1}. This solution is unique (up to a null event) and $\mathcal{F}_j$-measurable.
\end{lemma}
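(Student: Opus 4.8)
The plan is to treat each iteration as a deterministic fixed-point problem depending on the random parameter $\omega$, and then to upgrade the resulting pointwise solution to a measurable one via Picard iteration. Fix $j\in\{1,\ldots,n\}$ and assume, as part of an induction on $j$, that $V_{\text{S}1}^{j-1}$ is $\mathcal{F}_{j-1}$-measurable (for $j=1$ this is immediate, since $V^0=\eta$ is deterministic). Since $\theta_j=t_{j-1}+h\tau_j$ and $\tau_j$ are $\mathcal{F}_j$-measurable and $\mathcal{F}_{j-1}\subseteq\mathcal{F}_j$, for each fixed $\omega\in\Omega$ I would define the map $\Phi_\omega\colon\mathbb{R}^d\to\mathbb{R}^d$ by
\[
\Phi_\omega(x) = V_{\text{S}1}^{j-1}(\omega) + \tau_j(\omega)\, h\, f\bigl(\theta_j(\omega), x\bigr),
\]
so that a solution $V_\tau^j(\omega)$ is exactly a fixed point of $\Phi_\omega$.

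Existence and pointwise uniqueness will follow from the Banach fixed-point theorem. Using the spatial Lipschitz condition A4 together with $\tau_j(\omega)\in[0,1]$,
\[
\|\Phi_\omega(x)-\Phi_\omega(y)\| = \tau_j(\omega)\,h\,\|f(\theta_j(\omega),x)-f(\theta_j(\omega),y)\| \le \tau_j(\omega)\,hL\,\|x-y\| \le Lh\,\|x-y\|.
\]
Because the hypothesis $Lh<1$ makes every $\Phi_\omega$ a contraction on the complete space $\mathbb{R}^d$ with Lipschitz constant bounded by $Lh$ uniformly in $\omega$, each $\Phi_\omega$ has a unique fixed point, which I denote $V_\tau^j(\omega)$. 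This settles existence and the pointwise, hence almost-sure, uniqueness.

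The main obstacle is that Banach's theorem yields the solution only pointwise in $\omega$ and provides no information about its measurability. I would resolve this through the Picard iteration underlying the theorem: set $x_0(\omega)=V_{\text{S}1}^{j-1}(\omega)$ and $x_{k+1}(\omega)=\Phi_\omega(x_k(\omega))$. Assuming $x_k$ is $\mathcal{F}_j$-measurable, the map $\omega\mapsto(\theta_j(\omega),x_k(\omega))$ is $\mathcal{F}_j$-measurable, and composing it with the continuous (hence Borel) function $f$ from A1 shows that $\omega\mapsto f(\theta_j(\omega),x_k(\omega))$, and therefore $x_{k+1}$, is again $\mathcal{F}_j$-measurable; by induction on $k$ all iterates are $\mathcal{F}_j$-measurable. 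The contraction estimate guarantees that for every $\omega$ the sequence $(x_k(\omega))_k$ converges to the fixed point $V_\tau^j(\omega)$, so $V_\tau^j$ is a pointwise limit of $\mathcal{F}_j$-measurable functions and is thus itself $\mathcal{F}_j$-measurable.

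Finally, uniqueness up to a null event is immediate: if $\tilde V$ is any other $\mathcal{F}_j$-measurable random variable solving the equation almost surely, then on the full-probability event where both $V_\tau^j$ and $\tilde V$ satisfy it the pointwise uniqueness forces $\tilde V=V_\tau^j$, and since the filtration is augmented with the null sets $N$ this exceptional set is negligible. With the inductive step completed, $V_{\text{S}1}^j = V_{\text{S}1}^{j-1}+h\,f(\theta_j,V_\tau^j)$ is $\mathcal{F}_j$-measurable, closing the induction and showing that the scheme is well-defined.
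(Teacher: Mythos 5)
Your proposal is correct and takes essentially the same route as the paper's own proof (given for the analogous Lemma \ref{lm:s2}): Banach's fixed-point theorem applied $\omega$-wise for existence and pointwise uniqueness, followed by Picard iteration to obtain $\mathcal{F}_j$-measurability of the solution as a limit of measurable iterates, with completeness of the filtration absorbing the exceptional null sets. The only cosmetic difference is that the paper first restricts to the full-measure event $M_j=\{\omega\colon\tau_j(\omega)\in[0,1]\}$ before asserting the contraction property (since $\tau_j\in[0,1]$ is only guaranteed almost surely) and assigns arbitrary values off $M_j$, whereas you implicitly treat $\tau_j(\omega)\in[0,1]$ as holding for every $\omega$; this is harmless given the completeness argument you already invoke.
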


\noindent We skip the proof of Lemma \ref{lm:s1} as it is analogous to the proof of the following Lemma \ref{lm:s2}.

\begin{lemma} \label{lm:s2}
    Let $\left(\eta,f\right)\in F^\varrho$ and let us assume that $Lh<1$. Then for every $j\in\{1,\ldots,n\}$  there exists a solution $V^j_{\text{S}2}$ to the equation
    \begin{equation} \label{eq:lm-s2}
        V_{\text{S}2}^j =  V_{\text{S}2}^{j-1} + h  f(\theta_j, (1-\tau_j)V_{\text{S}2}^{j-1}+\tau_jV^j),
    \end{equation}
    cf. \eqref{eq:S2}. This solution is unique (up to a null event) and $\mathcal{F}_j$-measurable.
\end{lemma}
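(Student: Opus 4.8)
The plan is to solve \eqref{eq:lm-s2} pathwise by the Banach fixed point theorem and then recover measurability from the approximating iterates. Fix $j\in\{1,\ldots,n\}$ and regard $V_{\text{S}2}^{j-1}$ as a given $\mathcal{F}_{j-1}$-measurable random vector; this holds for $j=1$ since $V^0=\eta$ is deterministic, and for general $j$ it follows by induction once the step has been carried out. For each fixed $\omega\in\Omega$ I would introduce the map $\Phi_\omega\colon\mathbb{R}^d\to\mathbb{R}^d$,
$$\Phi_\omega(x)=V_{\text{S}2}^{j-1}(\omega)+h\,f\bigl(\theta_j(\omega),(1-\tau_j(\omega))V_{\text{S}2}^{j-1}(\omega)+\tau_j(\omega)\,x\bigr),$$
so that solving \eqref{eq:lm-s2} at $\omega$ is exactly finding a fixed point of $\Phi_\omega$.

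The key estimate is that $\Phi_\omega$ is a contraction. Using the spatial Lipschitz condition \ref{itm:A4} together with $\tau_j(\omega)\in[0,1]$, I get
$$\|\Phi_\omega(x)-\Phi_\omega(y)\|\le hL\,\|\tau_j(\omega)(x-y)\|=hL\,\tau_j(\omega)\,\|x-y\|\le hL\,\|x-y\|.$$
Since $Lh<1$ by assumption, $\Phi_\omega$ is a contraction on the complete space $\mathbb{R}^d$ with Lipschitz constant at most $hL$, uniformly in $\omega$. The Banach fixed point theorem then produces, for every $\omega$, a unique $V_{\text{S}2}^j(\omega)$ satisfying \eqref{eq:lm-s2}; in particular the solution is unique pathwise, hence a fortiori up to a null event. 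I note that neither the growth bound \ref{itm:A2} nor the temporal Hölder condition \ref{itm:A3} is needed here, only \ref{itm:A4} and $Lh<1$.

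The remaining point, which I expect to be the only delicate one, is $\mathcal{F}_j$-measurability. The plan is to realize $V_{\text{S}2}^j$ as the pointwise limit of the Picard iterates $x_0=V_{\text{S}2}^{j-1}$ and $x_{k+1}(\omega)=\Phi_\omega(x_k(\omega))$. The random variables $\tau_j$ and $\theta_j=t_{j-1}+h\tau_j$ are $\mathcal{F}_j$-measurable, $V_{\text{S}2}^{j-1}$ is $\mathcal{F}_{j-1}\subseteq\mathcal{F}_j$-measurable, and $f$ is continuous by \ref{itm:A1} and therefore Borel measurable; hence each $x_k$ is $\mathcal{F}_j$-measurable as a composition of $\mathcal{F}_j$-measurable maps. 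The contraction guarantees $x_k(\omega)\to V_{\text{S}2}^j(\omega)$ for every $\omega$, so $V_{\text{S}2}^j$ is $\mathcal{F}_j$-measurable as a pointwise limit of $\mathcal{F}_j$-measurable maps. This closes the induction and, with the obvious substitution, gives the analogous argument promised for Lemma \ref{lm:s1}.
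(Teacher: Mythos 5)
Your proposal is correct and takes essentially the same route as the paper: a pathwise application of the Banach fixed point theorem, with contraction constant $hL\tau_j\le hL<1$ coming from \ref{itm:A4}, followed by $\mathcal{F}_j$-measurability of the Picard iterates and passage to the limit. The only (minor) difference is that the paper does not assume $\tau_j(\omega)\in[0,1]$ for \emph{every} $\omega$, but works on the event $M_j=\{\tau_j\in[0,1]\}$ of probability one, assigns arbitrary values on its complement, and invokes completeness of $(\Omega,\mathcal{F}_j,\mathbb{P})$ to conclude measurability of the almost-sure limit --- which is precisely why the lemma claims uniqueness only up to a null event.
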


\begin{proof}
    For brevity, $V_{\text{S}2}$ will be denoted by $V$ (with no subindex) in this proof.  We proceed by induction. $V^0=\eta$ is deterministic and thus $\mathcal{F}_0$-measurable. Let us consider $j\in\{1,\ldots,n\}$ and assume that $V^0,\ldots,V^{j-1}$ have been already constructed and satisfy the statement of the lemma. Let 
    $$M_j = \{\omega\in\Omega\colon \tau_j\in[0,1]\}.$$
    It is clear that $M_j\in\mathcal{F}_j$ and $\mathbb{P}(M_j)=1$. Let us define a mapping
    $$p_j \colon \Omega\times\mathbb{R}^d \ni (\omega,x) \mapsto V^{j-1}(\omega) + h  f\bigl(\theta_j(\omega), (1-\tau_j(\omega))V^{j-1}(\omega)+\tau_j(\omega)x\bigr) \in\mathbb{R}^d. $$
    By Assumption \ref{itm:A4}, for every $\omega\in M_j$ and every $x,y\in\mathbb{R}^d$,
    $$\|p_j(\omega,x)-p_j(\omega,y)\|\leq Lh \| x- y\|.$$
    Since $Lh<1$, the function $x\mapsto p_k(\omega,x)$ is a contraction for every $\omega\in M_j$. By Banach fixed-point theorem, for every $\omega\in M_j$ there exists a unique $V^j(\omega)\in\mathbb{R}^d$ such that $V^j(\omega)=p_j\bigl(\omega,V^j(\omega)\bigr)$. For $\omega \in\Omega \setminus M_j$ we can assign any value to $V^j(\omega)$ and finally we get a mapping $V^j\colon \Omega\to\mathbb{R}^d$, which is a solution (with probability $1$) to equation \eqref{eq:lm-s2} and this is the only solution (up to a null event). 
    
    Now we will show that $V^j$ is $\mathcal{F}_j$-measurable. Let us define
    \begin{align*}
        v_0(\omega) & = V^{j-1}(\omega), \\
        v_{k}(\omega) & = p_j (\omega,v_{k-1}(\omega)) = V^{j-1}(\omega) + h  f\bigl(\theta_j(\omega), (1-\tau_j(\omega))V^{j-1}(\omega)+\tau_j(\omega)v_{k-1}(\omega)\bigr)
    \end{align*}
    for $\omega\in\Omega$ and $k\in\mathbb{Z}_+$. By induction with respect to $k$, we may show that $v_k$ is $\mathcal{F}_j$-measurable for any $k\in\mathbb{Z}_+$ (it follows mainly from the fact that the function $f$ is Borel measurable, cf. Assumption \ref{itm:A1}). According to the Banach fixed-point theorem, 
    $$V^j(\omega) = \lim_{k\to\infty} v_k(\omega)$$ 
    for every $\omega\in M_j$. Since the probability space $(\Omega, \mathcal{F}_j,\mathbb{P})$ is complete, the limit (in the almost sure sense) $V^j$ of the sequence of $\mathcal{F}_j$-measurable random variables $v_k$ ($k\to\infty$) is also $\mathcal{F}_j$-measurable. This completes the proof.
\end{proof}

\subsection{Results on the $L^p(\Omega)$-error}

In the following two theorems, we establish error bounds for schemes \eqref{eq:S1} and \eqref{eq:S2}. Our results imply that randomization contributes an extra $1/2$ to the rate of convergence of the implicit RK2 scheme. This is consistent with previous results concerning the impact of randomization on the convergence rate of the explicit RK2 scheme \cite{randRK, KruseWu_1, HeinMilla}. Proof techniques are also similar.

\begin{theorem} \label{theorem-error-1}
Let $p\in[2,\infty)$. There exists a constant $C = C(a,b,d,K,L,\varrho, p)>0$ such that for every $n\geq \lfloor b-a\rfloor+1$ satisfying the condition $Lh<1$ and for every $(\eta,f)\in F^{\varrho}$, it holds that
\begin{equation}
    \label{eq:theorem-error-1}
    \Bigl\| \max_{0\leq j \leq n} \bigl\| z(\eta,f)(t_j)- V^j_{\text{S}1}(\eta,f)\bigr\| \Bigr\|_p \leq C h^{\varrho+\frac12}.
\end{equation}
\end{theorem}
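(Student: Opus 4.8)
The plan is to split the global error into a martingale (Monte-Carlo quadrature) part, which produces the extra $\tfrac12$ in the rate, and a propagation part governed by a discrete Gronwall argument. Two preliminary facts about the exact solution $z=z(\eta,f)$ will be used throughout: by \ref{itm:A1}, \ref{itm:A2}, \ref{itm:A4} the problem has a unique solution on $[a,b]$, and \ref{itm:A0}, \ref{itm:A2} with Gronwall's inequality give $\sup_{t\in[a,b]}\|z(t)\|\le C_0(a,b,K,L)$; hence $\|z'(t)\|=\|f(t,z(t))\|\le K(1+C_0)$, so $z$ is Lipschitz with a constant depending only on $a,b,K,L$, which lets me turn a time increment $|s-\theta_j|\le h$ into a state increment $\|z(s)-z(\theta_j)\|\le Ch$.

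Writing $e^j=z(t_j)-V^j_{\text{S}1}$, using $z(t_j)=z(t_{j-1})+\int_{t_{j-1}}^{t_j}f(s,z(s))\rd s$ and subtracting the second line of \eqref{eq:S1}, I decompose
\begin{gather*}
e^j=e^{j-1}+R_j+D_j,\qquad e^0=0,\\
R_j=\int_{t_{j-1}}^{t_j}f(s,z(s))\rd s-h f(\theta_j,z(\theta_j)),\qquad D_j=h\bigl(f(\theta_j,z(\theta_j))-f(\theta_j,V_\tau^j)\bigr),
\end{gather*}
so that $e^j=\sum_{i=1}^j(R_i+D_i)$. The key point is that $(R_j)_j$ is a martingale-difference sequence for $(\mathcal F_j)$: the integral term is deterministic, while $\theta_j=t_{j-1}+h\tau_j$ with $\tau_j$ uniform and independent of $\mathcal F_{j-1}$, so that $\mathbb E[hf(\theta_j,z(\theta_j))\mid\mathcal F_{j-1}]=\int_{t_{j-1}}^{t_j}f(s,z(s))\rd s$ and hence $\mathbb E[R_j\mid\mathcal F_{j-1}]=0$.

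Rewriting $R_j=\int_{t_{j-1}}^{t_j}\bigl(f(s,z(s))-f(\theta_j,z(\theta_j))\bigr)\rd s$ and bounding the integrand by \ref{itm:A3}, \ref{itm:A4} and the Lipschitz bound on $z$ gives $\|R_j\|\le Ch^{\varrho+1}$. Applying Doob's $L^p$ maximal inequality together with the vector-valued Burkholder inequality (the combined estimate is valid since $p\ge2$) yields
$$\Bigl\|\max_{0\le m\le n}\bigl\|\textstyle\sum_{i=1}^m R_i\bigr\|\Bigr\|_p\le C_p\Bigl(\textstyle\sum_{i=1}^n\|R_i\|_p^2\Bigr)^{1/2}\le C_p\sqrt{n}\,h^{\varrho+1}=C\,h^{\varrho+\frac12},$$
where I used $nh=b-a$. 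This is the step that creates the extra $\tfrac12$ and is, to my mind, the heart of the argument.

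For the propagation term I first control the intermediate stage. Since $\theta_j-t_{j-1}=\tau_jh$, from the first line of \eqref{eq:S1} one gets $z(\theta_j)-V_\tau^j=e^{j-1}+\int_{t_{j-1}}^{\theta_j}\bigl(f(s,z(s))-f(\theta_j,V_\tau^j)\bigr)\rd s$; inserting $f(\theta_j,z(\theta_j))$ and using \ref{itm:A3}, \ref{itm:A4} gives $\|z(\theta_j)-V_\tau^j\|\le\|e^{j-1}\|+Ch^{\varrho+1}+Lh\|z(\theta_j)-V_\tau^j\|$, and since $Lh<1$ this solves to $\|z(\theta_j)-V_\tau^j\|\le C(\|e^{j-1}\|+h^{\varrho+1})$. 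Then \ref{itm:A4} gives $\|D_j\|\le Lh\|z(\theta_j)-V_\tau^j\|\le Ch\|e^{j-1}\|+Ch^{\varrho+2}$. Writing $M_j=\max_{0\le m\le j}\|e^m\|$ and $S=\max_{0\le m\le n}\|\sum_{i=1}^mR_i\|$, the decomposition and these bounds give, pointwise, $M_j\le S+Ch^{\varrho+1}+Ch\sum_{i=1}^jM_{i-1}$; taking $\|\cdot\|_p$ and applying the discrete Gronwall lemma yields $\|M_n\|_p\le(\|S\|_p+Ch^{\varrho+1})e^{C(b-a)}\le Ch^{\varrho+\frac12}$, which is \eqref{eq:theorem-error-1}. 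The only genuinely nontrivial step is the martingale/Burkholder estimate (together with verifying the conditional unbiasedness of the random quadrature); everything else is a routine Gronwall argument once the Lipschitz bound on $z$ and the $Lh<1$ stage estimate are in place.
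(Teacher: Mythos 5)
Correct, and essentially the same proof as the paper's: your decomposition is identical (the paper's $S_1^k$ and $S_2^k$ are exactly your $\sum_{i\le k}R_i$ and $\sum_{i\le k}D_i$), the intermediate-stage bound $\|z(\theta_j)-V_\tau^j\|\le C\bigl(\|e^{j-1}\|+h^{\varrho+1}\bigr)$ under $Lh<1$ is the same, and the concluding Gronwall argument matches. The only difference is that where the paper cites Theorem 3.1 of Kruse--Wu for the key estimate $\bigl\|\max_{1\le j\le n}\|S_1^j\|\bigr\|_p\le Ch^{\varrho+\frac12}$, you reprove it from scratch (martingale-difference property of the random quadrature, Doob's maximal inequality, Burkholder), which is precisely the content of that cited result.
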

\begin{proof}
    Since $z(t_0) = V^0 = \eta$ and $z'(\theta_j)=f(\theta_j,z(\theta_j))$, for every $k\in\{1,\ldots,n\}$ we get
\begin{align}
    z(t_k)-V^k & = \sum_{j=1}^k \bigl(z(t_j)-z(t_{j-1})\bigr) - \sum_{j=1}^k (V^j-V^{j-1}) \notag 
    \\ & = \sum_{j=1}^k \Bigl( \int\limits_{t_{j-1}}^{t_j} z'(s)\,\mathrm{d}s - hz'(\theta_j)\Bigr)  + h\sum_{j=1}^k \bigl[ f\bigl(\theta_j,z(\theta_j)\bigr)  - f\bigl(\theta_j,V_\tau^j\bigr) \bigr] \notag 
    \\ & = S_1^k + S_2^k. \label{eq:s1s2s3s4}
\end{align}
By Theorem 3.1 in \cite{KruseWu_1}, there is a constant $C=C(a,b,d,K,L,p)>0$ such that
\begin{equation} \label{eq:semi:bound_S1}
\Bigl\|\max_{1\leq j\leq n} \|S_1^j\|\Bigr\|_p \leq Ch^{\varrho+\frac12}.
\end{equation}

Let us define $u_0=0$ and $$u_k = \max_{0\leq j\leq k} \|z(t_j)-V^j\|=  \max_{1\leq j\leq k} \|z(t_j)-V^j\|$$ for $k\in\{1,\ldots,n\}$. By Assumption \ref{itm:A4} we get
\begin{equation}
    \label{eq:semi:s2}
    \max_{1\leq j\leq k}\|S_2^j\| \leq hL \sum_{j=1}^k \bigl\| z(\theta_j)-V_\tau^j \bigr\|.
\end{equation}
Note that
\begin{align*}
    \|z& (\theta_{j})-V_\tau^j\| = \bigl\| z(\theta_j)-V^{j-1} - \tau_jh f(\theta_j,V^j_\tau) \bigr\|
    \\ & \leq \bigl\| z(\theta_j)-z(t_{j-1})-\tau_jh f(\theta_j,z(\theta_j))\bigr\| + \|z(t_{j-1}) - V^{j-1}\| + h \bigl\|f(\theta_j,z(\theta_j))-f(\theta_j,V^j_\tau) \bigr\|
    \\ & \leq \int\limits_{t_{j-1}}^{\theta_j} \bigl\| z'(s)-z'(\theta_j)\bigl\|\,\mathrm{d}s + u_{j-1} + hL \| z(\theta_j)-V^j_\tau \| 
    \\ & \leq C_2h^{\varrho+1} + u_{j-1} + hL \| z(\theta_j)-V^j_\tau \|,
\end{align*}
where the last passage is by Lemma 2(ii) from \cite{randEuler} ($C_2$ is a constant from this lemma). Given that $hL\leq\frac12$, we have
$$\|z (\theta_{j})-V_\tau^j\| \leq 2C_2h^{\varrho+1} + 2u_{j-1}.$$
Let us insert this into \eqref{eq:semi:s2}:
\begin{equation*}
\max_{1\leq j\leq k}\|S_2^j\| \leq   2LC_2(b-a)h^{\varrho+1} + 2hL \sum_{j=0}^{k-1}u_{j}.    
\end{equation*}
Combining this with \eqref{eq:s1s2s3s4} yields
\begin{equation*}
    u_k \leq  \max_{1\leq j\leq k} \|S_1^j\|+2LC_2(b-a)h^{\varrho+1}+2hL \sum_{j=0}^{k-1} u_j
\end{equation*}
with probability $1$ for every $k\in\{1,\ldots,n\}$. By Gronwall's inequality,
\begin{equation} \label{eq:semi:bound_z_V}
    \max_{0\leq j\leq n} \|z(t_j)-V^j\| \leq \Bigl( \max_{1\leq j\leq n} \|S_1^j\|+2LC_2(b-a)h^{\varrho+1} \Bigr)\cdot e^{2L(b-a)}.
\end{equation}
By \eqref{eq:semi:bound_S1} and \eqref{eq:semi:bound_z_V}, we get \eqref{eq:theorem-error-1}.
\end{proof}

\begin{theorem} \label{theorem-error-2}
Let $p\in[2,\infty)$. There exists a constant $C = C(a,b,d,K,L,\varrho, p)>0$ such that for every $n\geq \lfloor b-a\rfloor+1$ satisfying the condition $Lh<1$ and for every $(\eta,f)\in F^{\varrho}$, it holds that
\begin{equation}
    \label{eq:theorem-error-2}
    \Bigl\| \max_{0\leq j \leq n} \bigl\| z(\eta,f)(t_j)- V^j_{\text{S}2}(\eta,f)\bigr\| \Bigr\|_p \leq C h^{\varrho+\frac12}.
\end{equation}
\end{theorem}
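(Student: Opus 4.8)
The plan is to mirror the proof of Theorem \ref{theorem-error-1} almost verbatim, since the only structural difference between \eqref{eq:S1} and \eqref{eq:S2} is the argument at which $f$ is evaluated. Writing $V=V_{\text{S}2}$ and using $z'(\theta_j)=f(\theta_j,z(\theta_j))$, I would first establish the same telescoping decomposition as in \eqref{eq:s1s2s3s4}, namely $z(t_k)-V^k = S_1^k + S_2^k$, where
$$S_1^k = \sum_{j=1}^k \Bigl(\int_{t_{j-1}}^{t_j} z'(s)\rd s - h z'(\theta_j)\Bigr), \qquad S_2^k = h\sum_{j=1}^k \bigl[f(\theta_j,z(\theta_j)) - f\bigl(\theta_j,(1-\tau_j)V^{j-1}+\tau_jV^j\bigr)\bigr].$$
Crucially, $S_1^k$ is \emph{identical} to the corresponding term in Theorem \ref{theorem-error-1}, as it depends only on $z$ and the nodes $\theta_j$ and not on the scheme; hence the bound \eqref{eq:semi:bound_S1} coming from Theorem 3.1 in \cite{KruseWu_1} carries over without any change.

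For the second term I would apply Assumption \ref{itm:A4} to obtain $\max_{1\le j\le k}\|S_2^j\| \le hL\sum_{j=1}^k \|z(\theta_j)-(1-\tau_j)V^{j-1}-\tau_jV^j\|$, and then estimate the argument error by inserting the convex combination of the exact values:
$$\|z(\theta_j)-(1-\tau_j)V^{j-1}-\tau_jV^j\| \le \underbrace{\|z(\theta_j)-(1-\tau_j)z(t_{j-1})-\tau_jz(t_j)\|}_{(\ast)} + (1-\tau_j)u_{j-1}+\tau_j u_j,$$
where $u_k=\max_{0\le j\le k}\|z(t_j)-V^j\|$ as before. The deterministic interpolation remainder $(\ast)$ equals $\int_{t_{j-1}}^{\theta_j}(z'(s)-z'(\theta_j))\rd s - \tau_j\int_{t_{j-1}}^{t_j}(z'(s)-z'(\theta_j))\rd s$ (the $z'(\theta_j)$ parts cancel), and is of order $h^{\varrho+1}$ by the same estimate used in Theorem \ref{theorem-error-1}, i.e. Assumption \ref{itm:A3} together with boundedness of $z'$, as in Lemma 2(ii) of \cite{randEuler}.

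The one genuinely new feature — and the main obstacle — is that, unlike in \eqref{eq:S1}, the current-step error $u_j$ (through the term $\tau_j V^j$) appears on the right-hand side, so the naive recursion is implicit in $u_k$. I would handle this by reindexing: since $u_0=0$ and $u_{j-1}\le u_j$, one has $\sum_{j=1}^k[(1-\tau_j)u_{j-1}+\tau_j u_j] \le \tau_k u_k + 2\sum_{j=1}^{k-1}u_j$, which isolates the single offending term $hL\tau_k u_k$. Because $\tau_k\le 1$ and $Lh$ is small (with $Lh\le\tfrac12$ one gets $(1-Lh)^{-1}\le 2$), this term is moved to the left and absorbed, yielding the clean explicit recursion $u_k \le 2\max_{1\le j\le k}\|S_1^j\| + C h^{\varrho+1} + 4hL\sum_{j=1}^{k-1}u_j$ with probability $1$.

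Finally I would apply Gronwall's inequality exactly as in \eqref{eq:semi:bound_z_V} to get $\max_{0\le j\le n}\|z(t_j)-V^j\| \le \bigl(2\max_{1\le j\le n}\|S_1^j\| + Ch^{\varrho+1}\bigr)e^{4L(b-a)}$ pathwise, then take the $L^p(\Omega)$-norm and invoke \eqref{eq:semi:bound_S1} (together with $h^{\varrho+1}\le h^{\varrho+\frac12}$) to conclude \eqref{eq:theorem-error-2}. The estimate on $(\ast)$ and the Gronwall step are routine; the only care required is the absorption of the implicit $u_k$ term, which is the reason a smallness condition on $Lh$ is needed.
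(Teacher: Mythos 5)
Your proposal is correct and essentially reproduces the paper's own proof: the same telescoping decomposition (the paper merely names the two parts of your $S_2^k$ separately, as $\tilde S_2^k$ and $\tilde S_3^k$), the same bound \eqref{eq:semi:bound_S1} on $S_1^k$ from Theorem 3.1 of \cite{KruseWu_1}, the same absorption of the implicit $hLu_k$ term into the left-hand side using $Lh\le\tfrac12$, and the same Gronwall conclusion. The only immaterial deviation is in the interpolation remainder $(\ast)$: the paper writes $\tau_j\bigl(z(t_j)-z(t_{j-1})\bigr)=\tau_j h\,z'(\xi_j)$ via the mean-value theorem and then applies the H\"older continuity of $z'$ from Lemma 2 of \cite{randEuler}, whereas you cancel the $z'(\theta_j)$ contributions from the two integrals directly before applying the same H\"older estimate.
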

\begin{proof}
    We have
\begin{align}
    z(t_k)-V^k & = \sum_{j=1}^k \Bigl( \int\limits_{t_{j-1}}^{t_j} z'(s)\,\mathrm{d}s - hz'(\theta_j)\Bigr) \notag \\ & \ \ \ \ \ \ \ \ \  + h\sum_{j=1}^k \bigl[ f\bigl(\theta_j,z(\theta_j)\bigr) - f\bigl(\theta_j,(1-\tau_j)z(t_{j-1})+\tau_j z(t_j)\bigr) \bigr] \notag \\ & \ \ \ \ \ \ \ \ \  + h\sum_{j=1}^k \bigl[ f\bigl(\theta_j,(1-\tau_j)z(t_{j-1})+\tau_j z(t_j)\bigr) - f\bigl(\theta_j,(1-\tau_j)V^{j-1}+\tau_j V^j\bigr) \bigr] \notag \\ & = S_1^k + \tilde S_2^k + \tilde S_3^k. \label{eq:s1s2s3}
\end{align}

Note that the definition of $S_1^k$ is the same as in the proof of Theorem \ref{theorem-error-1} so the bound \eqref{eq:semi:bound_S1} remains valid.

Note that for every $k\in\{1,\ldots,n\}$,
\begin{align}
    \|\tilde S_2^k\| & \leq hL\sum_{j=1}^k \bigl\| z(\theta_j) - (1-\tau_j)z(t_{j-1}) - \tau_j z(t_j) \bigr\| \notag
    \\ & = hL\sum_{j=1}^k \Bigl\| \int\limits_{t_{j-1}}^{\theta_j}z'(s)\,\mathrm{d}s - \tau_jh\cdot \frac{z(t_j)-z(t_{j-1})}{t_j-t_{j-1}} \Bigr\| = hL\sum_{j=1}^k \Bigl\| \int\limits_{t_{j-1}}^{\theta_j}\bigl(z'(s)-z'(\xi_j)\bigr)\,\mathrm{d}s \Bigr\| \notag
     \\ & \leq hL\sum_{j=1}^k \int\limits_{t_{j-1}}^{\theta_j}\bigl\|z'(s)-z'(\xi_j)\bigr\|\,\mathrm{d}s \leq hLC \sum_{j=1}^k \int\limits_{t_{j-1}}^{\theta_j} |s-\xi_j |^\varrho\,\mathrm{d}s \notag \\ & \leq hLC \sum_{j=1}^n \int\limits_{0}^{h} s^\varrho\,\mathrm{d}s = \frac{LC(b-a)}{\varrho+1}h^{\varrho+1}. \label{eq:bound_S2}
\end{align}
In the first line, we used Assumption \ref{itm:A4}. In the second line, we used the mean-value theorem, $\xi_j\in (t_{j-1},t_j)$. In the third line, we used (43) from Lemma 2 in \cite{randEuler}.

Let us define $u_0=0$ and $$u_k = \max_{0\leq j\leq k} \|z(t_j)-V^j\|=  \max_{1\leq j\leq k} \|z(t_j)-V^j\|$$ for $k\in\{1,\ldots,n\}$. By Assumption \ref{itm:A4} we get
\begin{align*}
\max_{1\leq j\leq k}\|\tilde S_3^j\| & \leq hL \max_{1\leq j\leq k} \sum_{i=1}^j \bigl\| (1-\tau_i)(z(t_{i-1})-V^{i-1})+\tau_j (z(t_i)-V^i)\bigr\| \\ & \leq hL \sum_{j=1}^k\bigl[(1-\tau_j)\|z(t_{j-1})-V^{j-1}\|+\tau_j \|z(t_j)-V^j\|\bigr] \\ & \leq hL \sum_{j=1}^k \|z(t_{j-1})-V^{j-1}\| + hL \|z(t_k)-V^k\| \\ & \leq hL\sum_{j=0}^{k-1} u_j + hL u_k.
\end{align*}
Combining this with \eqref{eq:s1s2s3} yields
\begin{equation*}
    (1-hL)u_k \leq  \max_{1\leq j\leq k} \|S_1^j\|+ \max_{1\leq j\leq k}\|\tilde S_2^j\|+hL \sum_{j=0}^{k-1} u_j
\end{equation*}
with probability $1$ for every $k\in\{1,\ldots,n\}$. By Gronwall's inequality and $hL<\frac12$,
\begin{equation} \label{eq:bound_z_V}
    \max_{0\leq j\leq n} \|z(t_j)-V^j\| \leq \Bigl( 2\max_{1\leq j\leq n} \|S_1^j\|+ 2\max_{1\leq j\leq n}\|\tilde S_2^j\| \Bigr)\cdot e^{2L(b-a)}.
\end{equation}
By \eqref{eq:semi:bound_S1}, \eqref{eq:bound_S2}, and \eqref{eq:bound_z_V}, we obtain \eqref{eq:theorem-error-2}.
\end{proof}

\section{Probabilistic stability} \label{sec:stability}

\subsection{Test equation}

We consider Dahlquist's test equation \cite{dahlquist}:
\begin{equation}
	\label{TEST_PROBLEM}
 \tag{D}
		\left\{ \begin{array}{ll}
			z'(t)= \lambda z(t), \ t\geq 0, \\
			z(0) = 1
		\end{array}\right.
\end{equation}
with $\lambda\in\mathbb{C}$. 

For deterministic schemes, the region of absolute stability is defined as the region of such $h\lambda$ in the complex plane that the approximated solution to the test problem (with step size $h>0$) converges to $0$ in infinity. The method is called A-stable, if its region of absolute stability covers the left complex half-plane $\mathbb{C}_- = \{z\in\mathbb{C}\colon \Re(z)<0\}$. Of course, it is related to the fact that the exact solution of \eqref{TEST_PROBLEM} is $z(t)=\exp(\lambda t)$ and we have 
\begin{equation*}
    \lim\limits_{t\to\infty} z(t)=0 \ \hbox{iff} \ \Re(\lambda)<0.
\end{equation*}

Let $(V^k)_{k=0}^\infty$ be a point-wise approximation to the solution of \eqref{TEST_PROBLEM} obtained for a fixed step size $h>0$ by one of the schemes \eqref{eq:S1} or \eqref{eq:S2}. For both schemes,
\begin{equation} \label{eq:test_V1}
    V_{\text{S}0}^k =V_{\text{S}1}^k = \prod_{j=1}^k \Bigl( 1+\frac{z}{1-z\tau_j} \Bigr),
\end{equation}
where $z=\lambda h$. Hence, both considered schemes will have the same stability regions.

Similarly, as in \cite{taylor, randRK}, we consider the following regions of probabilistic stability:
\begin{eqnarray}
    &&\mathcal{R}_{MS} = \{z \in\mathbb{C} \colon V^k \to 0 \ \hbox{in $L^2(\Omega)$ as} \ k\to \infty \}, \label{eq:regMS} \\
    &&\mathcal{R}_{AS} = \{z \in\mathbb{C} \colon V^k \to 0 \  \hbox{almost surely as} \ k\to \infty\},\label{eq:regAS} \\
     &&\mathcal{R}_{SP} = \left\{z \in \mathbb{C} \colon  V^k \to 0 \text{ in probability as} \ k\to \infty  \right\},
    \label{eq:regSP}
\end{eqnarray}
where $z=\lambda h$. The region $\mathcal{R}_{MS}$ is called the region of mean-square stability, $\mathcal{R}_{AS}$ -- the region of asymptotic stability, and $\mathcal{R}_{SP}$ -- the region of stability in probability. 

We consider also an absolute stability region of deterministic schemes \eqref{eq:detS1} and \eqref{eq:detS2} as a reference region. These two deterministic schemes are A-stable:
\begin{equation}
    \mathcal{R}_{ref} =  \Bigl\{ z\in\mathbb{C} \colon z\neq 2 \wedge \Bigl| 1+\frac{z}{1-\frac12 z} \Bigr| <1 \Bigr\} = \bigl\{ z\in\mathbb{C}\colon |z+2| < |z-2| \bigr\} = \mathbb{C}_-. \label{eq:ref1} 
\end{equation}

\subsection{Regions of mean-square stability}

In this section, we prove the basic properties of $\mathcal{R}_{MS}$. In particular, we show that this region is bounded (Fact \ref{f:MS:bounded}), which implies that the randomized implicit RK2 scheme is not A-stable in the mean-square probability sense.

Let us start by rewriting the definition \eqref{eq:regMS} in a more convenient form. It is easy to see that 
$$\mathbb{E}|V_{\text{S}1}^k|^2 = \prod_{j=1}^k \mathbb{E}\Bigl| 1+\frac{z}{1-z\tau_j} \Bigr|^2 = \Bigl[\mathbb{E}\Bigl| 1+\frac{z}{1-z\tau_1} \Bigr|^2\Bigr]^k,$$
cf. \eqref{eq:test_V1}. Thus, 
\begin{equation} \label{eq:regMS2}
    \mathcal{R}_{MS} = \Bigl\{z \in\mathbb{C} \colon  \mathbb{E}\Bigl| 1+\frac{z}{1-z\tau_1} \Bigr|^2 < 1 \Bigr\}.
\end{equation}

\begin{fact} \label{f:MS:conjugate}
    The region $\mathcal{R}_{MS}$ is symmetric with respect to the real axis.
\end{fact}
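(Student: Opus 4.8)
The plan is to work directly from the convenient reformulation \eqref{eq:regMS2}, which characterizes membership in $\mathcal{R}_{MS}$ through the single scalar quantity $g(z) := \mathbb{E}\bigl| 1+\frac{z}{1-z\tau_1} \bigr|^2$. Symmetry with respect to the real axis means precisely that $z\in\mathcal{R}_{MS}$ if and only if $\bar z\in\mathcal{R}_{MS}$, so it suffices to prove the identity $g(\bar z) = g(z)$ for every $z\in\mathbb{C}$.

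First I would exploit the fact that $\tau_1$ is a real-valued random variable (uniform on $[0,1]$), so that complex conjugation commutes with the substitution $z\mapsto\bar z$ pointwise in $\omega$. Concretely, for each fixed realization of $\tau_1$ one has
\begin{equation*}
    \overline{1+\frac{z}{1-z\tau_1}} = 1+\frac{\bar z}{1-\bar z\tau_1},
\end{equation*}
since conjugation is a field automorphism of $\mathbb{C}$ that fixes the real number $\tau_1$ and the constant $1$. (A minor technical check is that the denominator $1-z\tau_1$ does not vanish on a set of positive measure; this is immediate because $1-z\tau_1=0$ can hold for at most one value of $\tau_1\in[0,1]$, a $\mathbb{P}$-null event, so the integrand is defined almost surely.)

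Next I would take moduli and use $|w|=|\bar w|$ to obtain, almost surely,
\begin{equation*}
    \Bigl| 1+\frac{\bar z}{1-\bar z\tau_1} \Bigr|^2 = \Bigl| \overline{1+\frac{z}{1-z\tau_1}} \Bigr|^2 = \Bigl| 1+\frac{z}{1-z\tau_1} \Bigr|^2.
\end{equation*}
Taking expectations of both sides yields $g(\bar z)=g(z)$, and hence the strict inequality $g(z)<1$ holds if and only if $g(\bar z)<1$. This establishes $z\in\mathcal{R}_{MS}\Leftrightarrow \bar z\in\mathcal{R}_{MS}$, which is the claimed symmetry.

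There is no real obstacle here: the argument is elementary and rests entirely on the realness of $\tau_1$, which makes the whole integrand behave covariantly under conjugation. The only point requiring a word of care is the measure-theoretic justification that the expectation is well defined (the null-set issue with the denominator noted above); everything else is a direct computation.
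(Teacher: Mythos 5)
Your proof is correct and takes essentially the same route as the paper's: both rest on the pointwise identity $\bigl|1+\frac{z}{1-tz}\bigr| = \bigl|1+\frac{\bar z}{1-t\bar z}\bigr|$ (valid for $tz\neq 1$), obtained because conjugation is a modulus-preserving field automorphism fixing the real parameter $t=\tau_1$, and then conclude via the characterization \eqref{eq:regMS2}. Your additional remark about the null event $\{1-z\tau_1=0\}$ is a harmless extra precaution that the paper handles implicitly by restricting to $tz\neq 1$.
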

\begin{proof}
    Note that for each $t\in [0,1]$ such that $tz\neq 1$ we have 
    $$\Bigl| 1 + \frac{z}{1-tz} \Bigr| = \left| \overline{\Bigl(\frac{1-tz+z}{1-tz}\Bigr)} \right|=\left| \frac{\overline{1-tz+z}}{\overline{1-tz}} \right| = \Bigl| \frac{1-t\bar z+\bar z}{1-t\bar z} \Bigr| = \Bigl| 1 + \frac{\bar z}{1-t\bar z} \Bigr|.$$
\end{proof}
    
\begin{fact} \label{f:MS:C-}
    $\mathcal{R}_{MS}\subset \mathbb{C}_-$.
\end{fact}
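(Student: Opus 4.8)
The plan is to prove the contrapositive: if $\Re(z)\geq 0$, then $\mathbb{E}\bigl|1+\frac{z}{1-z\tau_1}\bigr|^2\geq 1$, so that $z\notin\mathcal{R}_{MS}$ by \eqref{eq:regMS2}. First I would put the integrand into a more symmetric form. Since $1+\frac{z}{1-tz}=\frac{1+(1-t)z}{1-tz}$ and $\tau_1$ is uniform on $[0,1]$,
\begin{equation*}
    \mathbb{E}\Bigl|1+\frac{z}{1-z\tau_1}\Bigr|^2=\int_0^1 g(t)\,\mathrm{d}t,\qquad g(t)=\frac{|1+(1-t)z|^2}{|1-tz|^2}.
\end{equation*}
Before proceeding I would dispose of the degenerate situations. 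If $z=0$ the expectation equals $1$. If $1-tz=0$ for some $t\in[0,1]$ (equivalently, $z$ is real with $z\geq 1$), then $g$ has a non-integrable singularity of second order and the expectation is $+\infty$; in either case the value is $\geq 1$, so $z\notin\mathcal{R}_{MS}$. In the remaining case $D(t):=|1-tz|^2>0$ on $[0,1]$, hence $g$ is continuous and positive and $\log g$ is integrable.

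The key idea is to bound $\int_0^1 g\,\mathrm{d}t$ from below via Jensen's inequality applied to the convex function $\exp$ and the random variable $\log g(\tau_1)$:
\begin{equation*}
    \int_0^1 g(t)\,\mathrm{d}t=\mathbb{E}\exp\bigl(\log g(\tau_1)\bigr)\geq \exp\Bigl(\int_0^1\log g(t)\,\mathrm{d}t\Bigr).
\end{equation*}
It therefore suffices to show that $\int_0^1\log g(t)\,\mathrm{d}t\geq 0$ whenever $\Re(z)\geq 0$. Substituting $s=1-t$ in the contribution of the numerator symmetrizes the log-integral:
\begin{equation*}
    \int_0^1\log g(t)\,\mathrm{d}t=\int_0^1\log|1+sz|^2\,\mathrm{d}s-\int_0^1\log|1-sz|^2\,\mathrm{d}s=\int_0^1\log\frac{|1+sz|^2}{|1-sz|^2}\,\mathrm{d}s.
\end{equation*}
Now the elementary identity $|1+sz|^2-|1-sz|^2=4s\,\Re(z)$ shows that for $s\in[0,1]$ and $\Re(z)\geq 0$ the ratio under the logarithm is $\geq 1$, so the integrand is nonnegative and the whole integral is $\geq 0$. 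Combining this with Jensen's inequality yields $\int_0^1 g\,\mathrm{d}t\geq 1$, completing the contrapositive.

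I expect the main (and essentially only) non-routine step to be the symmetrizing substitution $s=1-t$: it is what turns the asymmetric quotient $g$ into an integrand whose sign is governed exactly by the sign of $\Re(z)$, after which the sign analysis collapses to the one-line identity $|1+sz|^2-|1-sz|^2=4s\,\Re(z)$. The only other point requiring care is the separation of the degenerate cases where $1-tz$ vanishes on $[0,1]$, needed so that $\log g$ is integrable and Jensen's inequality legitimately applies.
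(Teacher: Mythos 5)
Your proof is correct, and it takes a route that is related to but cleaner than the paper's. The paper first reduces to $\Im(z)\geq 0$ via conjugate symmetry (its Fact 2), then splits into three cases: real $z\in[0,1)$ (pointwise bound on the integrand), real $z\geq 1$ (divergent integral, exactly your degenerate case), and genuinely complex $z$, where it applies the same symmetrizing substitution $s=1-t$ as you do, but then uses the \emph{pointwise} AM--GM inequality $\tfrac12\bigl(g(t)+g(1-t)\bigr)\geq\sqrt{g(t)g(1-t)}$ and finishes with a geometric argument (comparing angles in a figure) to show $|1+(1-t)z|\geq|1-(1-t)z|$ and $|1+tz|\geq|1-tz|$. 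Your argument replaces the pointwise AM--GM plus geometry by a single application of Jensen's inequality $\mathbb{E}\,e^{X}\geq e^{\mathbb{E}X}$, after which everything collapses to the sign of $\int_0^1\log\frac{|1+sz|^2}{|1-sz|^2}\,\mathrm{d}s$, settled by the identity $|1+sz|^2-|1-sz|^2=4s\,\Re(z)$ --- which is precisely the algebraic content of the paper's geometric claims. What you gain: a unified treatment (no conjugate-symmetry reduction, no separate case for real $z\in[0,1)$, no figure) and a fully algebraic verification of the core inequality. What the paper's version gains: it avoids Jensen in integral form, using only the elementary two-point AM--GM, and the picture conveys why the inequality holds. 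One small point you should make explicit: to split the log-integral into two finite pieces you need $1+sz\neq 0$ on $[0,1]$, which is immediate since $\Re(1+sz)\geq 1$ when $\Re(z)\geq 0$; with that noted, your argument is complete.
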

\begin{proof}
    Let $z=a+bi$, $a\geq 0$ and $b\in\mathbb{R}$. We will show that $\displaystyle \mathbb{E}\Bigl| 1+\frac{z}{1-z\tau_1} \Bigr|^2\geq 1$, which is equivalent to $z\notin \mathcal{R}_{MS}$. By Fact \ref{f:MS:conjugate}, it suffices to consider $b\geq 0$.

    \textit{Case 1:} $a\in [0,1)$ and $b=0$. Then $\displaystyle \frac{a}{1-ta}\geq 0$ for all $t\in [0,1]$ and as a result,
    $$\mathbb{E}\Bigl| 1+\frac{z}{1-z\tau_1} \Bigr|^2 = \Bigl(1+ \frac{a}{1-\tau_1 a} \Bigr)^2\geq 1.$$

    \textit{Case 2:} $a\in [1,\infty)$ and $b=0$. Then $1-ta=0$ for $t=\frac{1}{a}$ and
    \begin{align*}
        \mathbb{E}\Bigl| 1+\frac{z}{1-z\tau_1} \Bigr|^2 & \geq \int\limits_0^{1/a} \Bigl(1+\frac{a}{1-ta}\Bigr)^2\,\mathrm{d}t = \int\limits_0^{1/a} \Bigl(1+\frac{2a}{1-ta}+\frac{a^2}{(1-ta)^2}\Bigr)\,\mathrm{d}t \\ & = \frac{1}{a} - a + \lim_{t\to \frac{1}{a}^-} \Bigl( \frac{a}{1-ta}-2\log (1-ta)\Bigr) = \infty.
    \end{align*}

    \textit{Case 3:} $a\geq 0$ and $b>0$. Then $tz\neq 1$ for all $t\in [0,1]$. Let us put
    $$I = \mathbb{E}\Bigl| 1+\frac{z}{1-z\tau_1} \Bigr|^2 = \int\limits_0^1 \Bigl|\frac{1-tz+z}{1-tz}\Bigr|^2\,\mathrm{d}t = \int\limits_0^1 \Bigl|\frac{1+sz}{1-z+sz}\Bigr|^2\,\mathrm{d}s,$$
    where we substituted $s=1-t$. Hence,
    \begin{align}
        I & = \int\limits_0^1 \frac12 \cdot \Bigl( \Bigl|\frac{1-tz+z}{1-tz}\Bigr|^2 + \Bigl|\frac{1+tz}{1-z+tz}\Bigr|^2\Bigr)\,\mathrm{d}t \geq \int\limits_0^1  \Bigl|\frac{1-tz+z}{1-tz}\Bigr|\cdot \Bigl|\frac{1+tz}{1-z+tz}\Bigr|\,\mathrm{d}t \notag \\ & = \int\limits_0^1  \frac{|1-(tz-z)|}{|1-(z-tz)|}\cdot \frac{|1+tz|}{|1-tz|} \,\mathrm{d}t \label{eq:MS:C-}
    \end{align}

    \begin{figure}[h]
        \centering
        \begin{subfigure}{0.48\textwidth}
            \centering
            \begin{tikzpicture}
                \draw[line width=0.25mm, red ] (0.4,1) -- (3,0);
                \draw[line width=0.25mm, red ] (-0.4,-1) -- (3,0);
                \draw (0,0) node[anchor=east]{$0$}
                  -- (3,0) node[anchor=west]{$1$}
                  -- (4,2.5) node[anchor=west]{$1+z$}
                  -- (1,2.5) node[anchor=east]{$z$}
                  -- (-1,-2.5) node[anchor=east]{$-z$}
                  -- (2,-2.5) node[anchor=west]{$1-z$};
                \draw (2,-2.5) -- (3,0);
                \draw[fill=black] (0,0) circle(2pt);
                \draw[fill=black] (1,2.5) circle(2pt);
                \draw[fill=black] (-1,-2.5) circle(2pt);
                \draw[fill=black] (3,0) circle(2pt);
                \draw[fill=black] (2,-2.5) circle(2pt);
                \draw[fill=black] (4,2.5) circle(2pt);
                \draw (0,0) [black,domain=0:68.2] plot ({0.75*cos(\x)}, {0.75*sin(\x)});
                \draw (0,0) node[anchor=215,inner sep=6.5pt]{$\alpha$};
                \draw[fill=black] (0.4,1) circle(2pt);
                \draw[fill=black] (-0.4,-1) circle(2pt);
                \draw (0.4,1) node[anchor=east]{$z-tz$};
                \draw (-0.4,-1) node[anchor=east]{$tz-z$};
                \draw (0.4+2.6*0.3,1-0.3) node[anchor=210, red, rotate=-20.5]{$|1-(z-tz)|$};
                \draw (-0.4+3.4*0.3,-1+0.3) node[anchor=150, red, rotate=17]{$|1-(tz-z)|$};
            \end{tikzpicture}
        \end{subfigure}
        \begin{subfigure}{0.48\textwidth}
            \centering
            \begin{tikzpicture}
                \draw[line width=0.25mm, red ] (-0.6,-1.5) -- (3,0);
                \draw[line width=0.25mm, red ] (0.6,1.5) -- (3,0);
                \draw (0,0) node[anchor=east]{$0$}
                  -- (3,0) node[anchor=west]{$1$}
                  -- (4,2.5) node[anchor=west]{$1+z$}
                  -- (1,2.5) node[anchor=east]{$z$}
                  -- (-1,-2.5) node[anchor=east]{$-z$}
                  -- (2,-2.5) node[anchor=west]{$1-z$};
                \draw (2,-2.5) -- (3,0);
                \draw[fill=black] (0,0) circle(2pt);
                \draw[fill=black] (1,2.5) circle(2pt);
                \draw[fill=black] (-1,-2.5) circle(2pt);
                \draw[fill=black] (3,0) circle(2pt);
                \draw[fill=black] (2,-2.5) circle(2pt);
                \draw[fill=black] (4,2.5) circle(2pt);
                \draw (0,0) [black,domain=0:68.2] plot ({0.75*cos(\x)}, {0.75*sin(\x)});
                \draw (0,0) node[anchor=215,inner sep=6.5pt]{$\alpha$};
                \draw[fill=black] (0.6,1.5) circle(2pt);
                \draw[fill=black] (-0.6,-1.5) circle(2pt);
                \draw (0.6,1.5) node[anchor=east]{$tz$};
                \draw (-0.6,-1.5) node[anchor=east]{$-tz$};
                \draw (0.6+2.4*0.3,1.5-1.5*0.3) node[anchor=210, red, rotate=-30]{$|1-tz|$};
                \draw (-0.6+3.6*0.3,-1.5+1.5*0.3) node[anchor=150, red, rotate=22]{$|1+tz|$};
            \end{tikzpicture}
        \end{subfigure}
        \caption{Geometric interpretation of moduli that appeared in \eqref{eq:MS:C-}.}
        \label{fig:MS:C-}
    \end{figure}

    Moduli $|1-(tz-z)|$, $|1-(z-tz)|$, $|1+tz|$, $|1-tz|$ (for any $t\in [0,1]$) are visualized as segments on the complex plane in Figure \ref{fig:MS:C-}. By the conditions $a\geq 0$ and $b>0$, we know that $\alpha \in \bigl(0,\frac{\pi}{4}\bigr)$, where $\alpha$ is an angle marked in Figure \ref{fig:MS:C-}. Thus, $|1-(tz-z)|\geq |1-(z-tz)|$ and $|1+tz|\geq |1-tz|$. Consequently, $I\geq 1$, which concludes the proof.
\end{proof}

\begin{fact} \label{f:MS:bounded}
    $\mathcal{R}_{MS}$ is bounded.
\end{fact}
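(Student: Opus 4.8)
The plan is to prove the stronger assertion that
$$\mathbb{E}\Bigl| 1+\frac{z}{1-z\tau_1} \Bigr|^2 \longrightarrow \infty \quad \text{as } |z|\to\infty.$$
Once this is in hand, there exists $R>0$ such that $|z|>R$ forces the expectation to exceed $1$, whence $z\notin\mathcal{R}_{MS}$ by \eqref{eq:regMS2}. Consequently $\mathcal{R}_{MS}\subseteq\{z\in\mathbb{C}\colon |z|\leq R\}$, which is the desired boundedness.

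First I would rewrite the expectation as an integral against the (uniform) law of $\tau_1$,
$$\mathbb{E}\Bigl| 1+\frac{z}{1-z\tau_1} \Bigr|^2 = \int_0^1 \Bigl| \frac{1+(1-t)z}{1-tz} \Bigr|^2\,\mathrm{d}t,$$
and study the integrand $g(t,z) = \tfrac{1+(1-t)z}{1-tz}$ as $|z|\to\infty$ with $t\in(0,1)$ held fixed. Dividing numerator and denominator by $z$ gives $g(t,z) = \tfrac{1/z+(1-t)}{1/z-t}\to -\tfrac{1-t}{t}$, so that $|g(t,z)|^2\to\bigl(\tfrac{1-t}{t}\bigr)^2$ pointwise for every fixed $t\in(0,1)$, irrespective of the direction along which $z$ escapes to infinity.

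The key observation is that this pointwise limit is not integrable: $\int_0^1\bigl(\tfrac{1-t}{t}\bigr)^2\,\mathrm{d}t = +\infty$, owing to the nonintegrable singularity at $t=0$. I would then fix an arbitrary sequence $(z_n)$ with $|z_n|\to\infty$ and apply Fatou's lemma to the nonnegative functions $t\mapsto |g(t,z_n)|^2$:
$$\liminf_{n\to\infty}\int_0^1 |g(t,z_n)|^2\,\mathrm{d}t \geq \int_0^1 \liminf_{n\to\infty} |g(t,z_n)|^2\,\mathrm{d}t = \int_0^1 \Bigl(\frac{1-t}{t}\Bigr)^2\,\mathrm{d}t = +\infty.$$
Since the sequence $(z_n)$ was arbitrary, this yields $\mathbb{E}\bigl| 1+\tfrac{z}{1-z\tau_1} \bigr|^2\to\infty$ as $|z|\to\infty$, completing the argument.

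The main obstacle is that one cannot naively interchange the limit with the integral: the family $|g(t,z)|^2$ is not dominated by a fixed integrable function, and for real $z>1$ the integrand even develops a genuine pole at the moving point $t=1/z\in(0,1)$ (exactly the divergence recorded in Case~2 of Fact~\ref{f:MS:C-}). Fatou's lemma is the right instrument precisely because it requires only nonnegativity and pointwise convergence of the integrand, thereby sidestepping both the absence of a dominating function and the moving singularity. The only routine verifications left are the pointwise limit of $g(t,z)$ and the divergence of $\int_0^1(\tfrac{1-t}{t})^2\,\mathrm{d}t$, both elementary.
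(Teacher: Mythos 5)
Your proof is correct, and it takes a genuinely different route from the paper's. The paper first invokes Fact~\ref{f:MS:C-} to restrict attention to $z=a+bi$ with $a<0$, then applies Jensen's inequality in the chain
$\mathbb{E}\bigl|1+\frac{z}{1-z\tau_1}\bigr|^2 \geq \bigl|1+z\,\mathbb{E}\bigl(\frac{1}{1-z\tau_1}\bigr)\bigr|^2 \geq \bigl[1+\Re\bigl(z\,\mathbb{E}\bigl(\frac{1}{1-z\tau_1}\bigr)\bigr)\bigr]^2$
and evaluates the real part in closed form, $\Re\bigl(z\,\mathbb{E}\bigl(\frac{1}{1-z\tau_1}\bigr)\bigr)=-\frac12\log\bigl(|z|^2-2a+1\bigr)$. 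This yields an explicit, quantitative conclusion: every $z\in\mathbb{C}_-$ with $|z|\geq\sqrt{e^4-1}$ lies outside $\mathcal{R}_{MS}$. Your argument replaces the computation by soft analysis: the pointwise limit $\bigl|\frac{1+(1-t)z}{1-tz}\bigr|^2\to\bigl(\frac{1-t}{t}\bigr)^2$ (transparent from the identity $1+\frac{z}{1-tz}=-\frac{1-t}{t}+\frac{1}{t(1-tz)}$, valid in every direction of escape) together with Fatou's lemma shows that the expectation diverges to $+\infty$ as $|z|\to\infty$. This is a stronger qualitative statement than the paper proves, it needs no appeal to Fact~\ref{f:MS:C-}, and Fatou is indeed exactly the right tool to bypass the moving pole at $t=1/z$ for real $z>1$ and the lack of a dominating function. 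What you give up is effectivity: your proof produces no explicit radius $R$, whereas the paper's exhibits one ($\mathcal{R}_{MS}$ is contained in the half-disc $\{z\in\mathbb{C}_-\colon |z|<\sqrt{e^4-1}\}$). Two routine points you could state for completeness: $t\mapsto\bigl|\frac{1+(1-t)z_n}{1-tz_n}\bigr|^2$ is measurable for each $n$, and for fixed $t\in(0,1)$ the denominator $1-tz_n$ can vanish for at most finitely many $n$ (since $|z_n|\to\infty$), so the pointwise $\liminf$ is indeed $\bigl(\frac{1-t}{t}\bigr)^2$ for every $t\in(0,1)$; finally, the passage from ``every sequence with $|z_n|\to\infty$ has $\liminf$ equal to $+\infty$'' to ``the expectation tends to $\infty$'' is the standard contradiction-by-subsequence argument, as you indicate. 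None of these affects the validity of your proof.
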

\begin{proof}
    Let $z=a+bi$, where $a<0$ (see Fact \ref{f:MS:C-}) and $b\in\mathbb{R}$. We will show that for sufficiently big $|z|$, we have $z\notin \mathcal{R}_{MS}$, that is $\displaystyle \mathbb{E}\Bigl| 1+\frac{z}{1-z\tau_1} \Bigr|^2\geq 1$.

    We have
    \begin{equation}
        \label{eq:MS:bounded:1}
        \mathbb{E}\Bigl| 1+\frac{z}{1-z\tau_1} \Bigr|^2 \geq \Bigl| 1+z\mathbb{E}\Bigl(\frac{1}{1-z\tau_1}\Bigr) \Bigr|^2 \geq \Bigl[ 1 + \Re\Bigl( z\mathbb{E}\Bigl(\frac{1}{1-z\tau_1} \Bigr) \Bigr]^2 
    \end{equation}    
    Note that
    $$\mathbb{E}\Bigl(\frac{1}{1-z\tau_1}\Bigr) = \int\limits_0^1\frac{\mathrm{d}t}{1-at-bti} = \int\limits_0^1\frac{1-at}{(1-at)^2+(bt)^2}\,\mathrm{d}t + i \int\limits_0^1\frac{bt}{(1-at)^2+(bt)^2}\,\mathrm{d}t$$
    and as a result,
    \begin{equation*}
         \Re\Bigl( z\mathbb{E}\Bigl(\frac{1}{1-z\tau_1} \Bigr) \Bigr) = \int\limits_0^1\frac{a(1-at)-b\cdot bt}{(1-at)^2+(bt)^2}\,\mathrm{d}t = \int\limits_0^1\frac{a-|z|^2t}{1-2at+|z|^2t^2}\,\mathrm{d}t = -\frac12 \log\bigl( |z|^2-2a+1 \bigr).
    \end{equation*}
    Substituting this to \eqref{eq:MS:bounded:1}, we get
    $$\mathbb{E}\Bigl| 1+\frac{z}{1-z\tau_1} \Bigr|^2 \geq \Bigl(1-\frac12 \log\bigl( |z|^2-2a+1 \bigr)\Bigr)^2.$$
    Consider $|z|\geq \sqrt{e^4-1}$. Then by $a<0$ and $|z|^2+1\geq e^4$, we have
    $$\mathbb{E}\Bigl| 1+\frac{z}{1-z\tau_1} \Bigr|^2 \geq \Bigl(\frac12 \log\bigl( |z|^2-2a+1 \bigr)-1\Bigr)^2 > \Bigl(\frac12 \log\bigl( |z|^2+1 \bigr)-1\Bigr)^2\geq 1.$$
\end{proof}

\begin{remark}
    By Fact \ref{f:MS:C-} and Fact \eqref{f:MS:bounded}, $\mathcal{R}_{MS}$ is a bounded subset of $\mathbb{C}_-$. By \eqref{eq:ref1}, $\mathcal{R}_{ref}=\mathbb{C}_-$. Hence, randomized schemes \eqref{eq:S1} and \eqref{eq:S2} are not A-stable in the mean-square stability sense, although their deterministic counterparts (obtained by putting $\tau_j=\frac12$ for all $j\in\mathbb{Z}_+$) are A-stable.  
\end{remark}

\noindent In the next fact, we consider the interval of mean-square stability, defined as
\begin{equation*}
    \mathcal{I}_{MS} = \mathcal{R}_{MS}\cap \mathbb{R}.
\end{equation*}

\begin{fact} \label{f:MS:interval}
    $\displaystyle \mathcal{I}_{MS}=\Bigl\{ a\in (-\infty, 0) \colon \mathbb{E}\Bigl(1+\frac{a}{1- a\tau_1}\Bigr)^2 < 1 \Bigr\} = (-x_0,0)$, where $4.03<x_0<4.04$.
\end{fact}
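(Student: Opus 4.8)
The plan is to reduce the claim to an elementary one-dimensional analysis of a real function of $a$, and then to study that function's sign.

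For the first equality, I would start from Fact~\ref{f:MS:C-}, which gives $\mathcal{R}_{MS}\subset\mathbb{C}_-$ and hence $\mathcal{I}_{MS}=\mathcal{R}_{MS}\cap\mathbb{R}\subset(-\infty,0)$. For real $z=a<0$ and $t\in[0,1]$ we have $1-at>0$, so $1+\frac{a}{1-a\tau_1}$ is real-valued and $\bigl|1+\frac{z}{1-z\tau_1}\bigr|^2=\bigl(1+\frac{a}{1-a\tau_1}\bigr)^2$. Substituting $z=a$ into the reformulation~\eqref{eq:regMS2} then yields the first equality immediately. It remains to analyze, for $a<0$, the function
$$\psi(a):=\mathbb{E}\Bigl(1+\frac{a}{1-a\tau_1}\Bigr)^2-1=\int_0^1\Bigl(1+\frac{a}{1-at}\Bigr)^2\,\mathrm{d}t-1,$$
and to show $\{a<0:\psi(a)<0\}=(-x_0,0)$. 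I would expand the square as $1+\frac{2a}{1-at}+\frac{a^2}{(1-at)^2}$ and integrate term by term, using $\int_0^1\frac{\mathrm{d}t}{1-at}=-\frac1a\ln(1-a)$ and $\int_0^1\frac{\mathrm{d}t}{(1-at)^2}=\frac{1}{1-a}$, to obtain the closed form
$$\psi(a)=\frac{a^2}{1-a}-2\ln(1-a).$$

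The core of the argument is the monotonicity analysis of $\psi$. Differentiating gives
$$\psi'(a)=\frac{a(2-a)}{(1-a)^2}+\frac{2}{1-a}=\frac{2-a^2}{(1-a)^2},$$
so on $(-\infty,0)$ the sign of $\psi'$ is that of $2-a^2$: the function $\psi$ is strictly increasing on $(-\sqrt2,0)$ and strictly decreasing on $(-\infty,-\sqrt2)$. Combined with $\psi(0)=0$ this forces $\psi<0$ on $(-\sqrt2,0)$, and $\psi(-\sqrt2)$ is the global minimum of $\psi$ on $(-\infty,0)$. Since $\frac{a^2}{1-a}\sim-a\to+\infty$ while $2\ln(1-a)$ grows only logarithmically, $\psi(a)\to+\infty$ as $a\to-\infty$. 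By the intermediate value theorem and strict monotonicity on $(-\infty,-\sqrt2)$, there is therefore a \emph{unique} point $-x_0<-\sqrt2$ with $\psi(-x_0)=0$, and $\psi<0$ exactly on $(-x_0,0)$. This establishes $\mathcal{I}_{MS}=(-x_0,0)$ with a well-defined constant $x_0$.

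Finally, to pin down $x_0$, I would evaluate the sign of $\psi$ at the two endpoints: a direct computation of the two logarithms gives $\psi(-4.03)\approx-0.0020<0$ and $\psi(-4.04)\approx0.0036>0$. Because $\psi$ is strictly monotone on the interval $(-\infty,-\sqrt2)$ containing both points, its unique zero $-x_0$ must lie strictly between $-4.04$ and $-4.03$, i.e. $4.03<x_0<4.04$. I expect the main obstacle to be not the computation of $\psi$ but the second step, namely verifying that the sub-level set $\{\psi<0\}$ is a single interval rather than a union of intervals; this is precisely what the sign analysis of $\psi'$ together with the boundary behavior $\psi(0)=0$ and $\psi\to+\infty$ delivers. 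The numerical bounds then require only evaluating $\ln(5.03)$ and $\ln(5.04)$ to sufficient precision, since the values of $\psi$ there are small.
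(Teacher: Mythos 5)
Your proposal is correct and takes essentially the same route as the paper's proof: reduce to the sign of $g(a)=\frac{a^2}{1-a}-2\log(1-a)$ via the same term-by-term integration, compute $g'(a)=\frac{2-a^2}{(1-a)^2}$ to get monotonicity on $(-\infty,-\sqrt{2})$ and $(-\sqrt{2},0)$, and conclude from $g(0)=0$, $g(-4.03)<0$, $g(-4.04)>0$. The only difference is that you make explicit the limit $\psi(a)\to+\infty$ as $a\to-\infty$ and the intermediate-value argument for the uniqueness of the zero, details the paper leaves implicit.
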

\begin{proof}
    By Fact \ref{f:MS:C-}, $\mathcal{I}_{MS} \subset (-\infty, 0)$. Note that for $a<0$,
    \begin{align}
        \mathbb{E}\Bigl(1+\frac{a}{1- a\tau_1}\Bigr)^2 & = \int\limits_0^1 \Bigl(1+\frac{2a}{1+ta}+\frac{a^2}{(1-ta)^2}\Bigr)\,\mathrm{d}t = \Bigl( t-2\log (1-ta) + \frac{a}{1-ta} \Bigr)\Bigr|_0^1 \notag \\ & = 1 +\frac{a^2}{1-a} - 2\log (1-a) . \label{eq:MS:interval}
    \end{align}
    Hence, the condition $\displaystyle \mathbb{E}\Bigl(1+\frac{a}{1- a\tau_1}\Bigr)^2 < 1$ is equivalent to $\displaystyle \frac{a^2}{1-a} - 2\log (1-a)< 0$. Consider a function $$g\colon (-\infty,0)\ni a \mapsto \frac{a^2}{1-a} - 2\log (1-a)\in\mathbb{R}.$$
    Since $\displaystyle g'(a)=\frac{2-a^2}{(1-a)^2}$, we conclude that $g$ decreases in $\bigl(-\infty,-\sqrt{2}\bigr)$ and increases in $\bigl(-\sqrt{2},0\bigr)$. We have also $g(0)=0$, $g(-4.03)\approx-0.002<0$, and $g(-4.04)\approx 0.0036 >0$. This concludes the proof.
\end{proof}

\noindent In the following fact, we express the expected value from \eqref{eq:regMS2} in an explicit form. This is useful when drawing $\mathcal{R}_{MS}$, cf. Figure \ref{fig:regions}. This fact will be also used to show that the mean-square stability region is open (Fact \ref{f:MS:open}).

\begin{fact} \label{f:MS:formula}
    When $z=a+bi$, $a<0$ and $b\in\mathbb{R}\setminus \{0\}$, then $$\mathbb{E}\Bigl| 1+\frac{z}{1-z\tau_1} \Bigr|^2 = 1-\log (1-2a+a^2+b^2)+\frac{a^2+b^2}{|b|}\cdot \arctan \Bigl( \frac{|b|}{1-a} \Bigr).$$
\end{fact}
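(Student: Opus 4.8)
The plan is to reduce everything to a real definite integral in $t$ and integrate explicitly, the only delicate point being a single arctangent identity at the end.

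Since $\tau_1$ is uniform on $[0,1]$, I would first write
$$\mathbb{E}\Bigl| 1+\frac{z}{1-z\tau_1}\Bigr|^2 = \int_0^1 \Bigl| 1 + \frac{z}{1-tz}\Bigr|^2 \rd t = \int_0^1 \frac{|1+(1-t)z|^2}{|1-tz|^2}\rd t,$$
using $1 + \frac{z}{1-tz} = \frac{1+(1-t)z}{1-tz}$; note $1-tz\neq 0$ for $t\in[0,1]$ since its real part $1-ta\geq 1$ because $a<0$. Writing $z=a+bi$ and $r^2 = a^2+b^2$, the denominator is the real quadratic $D(t) := |1-tz|^2 = 1-2at+r^2t^2$ and the numerator is $N(t) := |1+(1-t)z|^2 = 1+2a(1-t)+r^2(1-t)^2$.

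Next I would do a polynomial division. A direct computation gives $N(t)-D(t) = r^2 + 2a - 2r^2 t$, and since $D'(t) = 2r^2 t - 2a$ this equals $r^2 - D'(t)$. Hence
$$\frac{N(t)}{D(t)} = 1 - \frac{D'(t)}{D(t)} + \frac{r^2}{D(t)},$$
and I integrate the three terms separately over $[0,1]$. The constant contributes $1$; the logarithmic-derivative term contributes $-[\log D(t)]_0^1 = -\log(1-2a+a^2+b^2)$, using $D(0)=1$; these already match the first two terms of the claimed formula.

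The remaining obstacle is $r^2\int_0^1 D(t)^{-1}\rd t$, where the hypothesis $b\neq 0$ becomes essential. Completing the square and using $r^2-a^2=b^2$,
$$D(t) = r^2\Bigl(t-\frac{a}{r^2}\Bigr)^2 + \frac{b^2}{r^2},$$
so the standard antiderivative gives
$$r^2\int_0^1 \frac{\rd t}{D(t)} = \frac{r^2}{|b|}\Bigl[\arctan\frac{r^2-a}{|b|} - \arctan\frac{-a}{|b|}\Bigr].$$
To reach the stated form I would apply $\arctan X - \arctan Y = \arctan\frac{X-Y}{1+XY}$ with $X=(r^2-a)/|b|$ and $Y=-a/|b|$; using $a^2+b^2=r^2$ one finds $X-Y = r^2/|b|$ and $1+XY = r^2(1-a)/b^2$, whence $\frac{X-Y}{1+XY}=\frac{|b|}{1-a}$. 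The step needing care is the branch of the identity: since $a<0$ we have $X>Y>0$ and $1+XY>0$, so $\arctan X-\arctan Y\in(-\tfrac\pi2,\tfrac\pi2)$ and the principal value applies with no added multiple of $\pi$. This yields exactly $\frac{a^2+b^2}{|b|}\arctan\frac{|b|}{1-a}$, and summing the three contributions gives the claim.
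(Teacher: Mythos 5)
Your proof is correct and follows essentially the same route as the paper: reduce to $\int_0^1 |1+(1-t)z|^2/|1-tz|^2\,\mathrm{d}t$, split the integrand as $1 - D'(t)/D(t) + |z|^2/D(t)$ with $D(t)=1-2at+|z|^2t^2$, integrate the log-derivative and arctangent pieces, and finish with the identity $\arctan X-\arctan Y=\arctan\frac{X-Y}{1+XY}$ on the principal branch. Your explicit verification that the branch condition holds (via $X>Y>0$ when $a<0$) is a slightly more careful justification of the same final step the paper uses.
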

\begin{proof}
    Note that for any $t\in [0,1]$,
    $$\Bigl|1+\frac{z}{1-tz}\Bigr|^2 = \frac{|1-at+a+(b-bt)i|^2}{|1-at+bti|^2} = 1+\frac{2a+|z|^2(1-2t)}{1-2at+|z|^2t^2}.$$
    Thus,
    \begin{align*}
        \mathbb{E}\Bigl| 1+\frac{z}{1-z\tau_1} \Bigr|^2 & = \int\limits_0^1 \Bigl( 1+\frac{2a+|z|^2(1-2t)}{1-2at+|z|^2t^2} \Bigr)\,\mathrm{d}t 
        \\ & = 1-\int\limits_0^1 \frac{\frac{\mathrm{d}}{\mathrm{d}t}\bigl( 1-2at+|z|^2t^2 \bigr)}{1-2at+|z|^2t^2}\,\mathrm{d}t + \int\limits_0^1 \frac{\frac{|z|^4}{b^2}}{1+\left(\frac{|z|^2}{|b|}t-\frac{a}{|b|}\right)^2}\,\mathrm{d}t \\ & = 1 -\log(1-2at+|z|^2t^2)\Bigr|_0^1 + \frac{|z|^2}{|b|}\arctan\Bigl(  \frac{|z|^2}{|b|}t-\frac{a}{|b|} \Bigr)\Bigr|_0^1 \\ & = 1 -\log(1-2a+|z|^2) + \frac{|z|^2}{|b|}\cdot\Bigl[\arctan\Bigl(  \frac{|z|^2}{|b|}-\frac{a}{|b|} \Bigr)-\arctan\Bigl( -\frac{a}{|b|} \Bigr)\Bigr] \\ & = 1-\log (1-2a+a^2+b^2)+\frac{a^2+b^2}{|b|}\cdot \arctan \Bigl( \frac{|b|}{1-a} \Bigr),
    \end{align*}
    where in the last passage we used the identity
    $$\arctan x - \arctan y = \arctan \frac{x-y}{1+xy}$$
    which holds for any $x, y\in\mathbb{R}$ such that $\arctan x - \arctan y \in \bigl(-\frac{\pi}{2},\frac{\pi}{2}\bigr)$.
\end{proof}

\begin{fact} \label{f:MS:open}
    $\mathcal{R}_{MS}$ is an open subset of $\mathbb{C}$.
\end{fact}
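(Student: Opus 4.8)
The plan is to exhibit $\mathcal{R}_{MS}$ as the preimage of an open set under a continuous function. Define $\phi(z)=\mathbb{E}\bigl|1+\tfrac{z}{1-z\tau_1}\bigr|^2$ wherever this expectation is finite. By Fact \ref{f:MS:C-} we already know $\mathcal{R}_{MS}\subseteq\mathbb{C}_-$, and $\mathbb{C}_-$ is open; hence it suffices to prove that the restriction of $\phi$ to $\mathbb{C}_-$ is real-valued and continuous. Once this is done, $\mathcal{R}_{MS}=\{z\in\mathbb{C}_-\colon \phi(z)<1\}=\phi^{-1}\bigl((-\infty,1)\bigr)$ is open in $\mathbb{C}_-$, and therefore open in $\mathbb{C}$.

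First I would handle the bulk of $\mathbb{C}_-$, namely the points $z=a+bi$ with $b\neq 0$. Here Fact \ref{f:MS:formula} supplies the closed form
$$\phi(z)=1-\log(1-2a+a^2+b^2)+\frac{a^2+b^2}{|b|}\arctan\Bigl(\frac{|b|}{1-a}\Bigr).$$
Each term is a composition of continuous functions on $\{a<0,\ b\neq 0\}$: the argument of the logarithm equals $(1-a)^2+b^2>0$, the map $b\mapsto|b|$ is continuous and nonzero there, and $\arctan$ is continuous. Thus $\phi$ is continuous on $\mathbb{C}_-\setminus\mathbb{R}$ with no further work.

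The main (and essentially only) obstacle is continuity across the negative real axis, where $\phi$ is instead given by the formula of Fact \ref{f:MS:interval}, namely $\phi(a)=1+\tfrac{a^2}{1-a}-2\log(1-a)$ for $a<0$. I would verify that the two formulas agree in the limit $b\to 0$. Using $1-2a+a^2+b^2=(1-a)^2+b^2$ gives $\log(1-2a+a^2+b^2)\to 2\log(1-a)$, and the expansion $\arctan(x)=x+o(x)$ as $x\to 0$ gives $\tfrac{a^2+b^2}{|b|}\arctan\bigl(\tfrac{|b|}{1-a}\bigr)\to\tfrac{a^2}{1-a}$. Hence the $b\neq 0$ expression tends to $1-2\log(1-a)+\tfrac{a^2}{1-a}$, which is exactly the value given by Fact \ref{f:MS:interval}. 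This establishes continuity of $\phi$ at every point of the negative real axis, completing the proof that $\phi$ is continuous on all of $\mathbb{C}_-$.

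Combining the two regimes, $\phi\colon\mathbb{C}_-\to\mathbb{R}$ is continuous, so $\mathcal{R}_{MS}=\phi^{-1}\bigl((-\infty,1)\bigr)$ is open, as claimed. An alternative to the closed-form argument would be to prove continuity of $\phi$ directly by dominated convergence, dominating $\bigl|1+\tfrac{z}{1-z\tau_1}\bigr|^2$ uniformly on a small neighborhood of any $z_0\in\mathbb{C}_-$ (such a uniform bound being available since $1-tz$ stays away from $0$ for $z$ near $z_0\in\mathbb{C}_-$); but since Facts \ref{f:MS:formula} and \ref{f:MS:interval} already provide explicit expressions, the seam-checking approach above is shorter.
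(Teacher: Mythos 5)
Your proposal is correct and follows essentially the same route as the paper: both use the closed form of Fact \ref{f:MS:formula} for continuity off the real axis, verify that its limit as $b\to 0$ matches the expression \eqref{eq:MS:interval} from Fact \ref{f:MS:interval}, and then conclude by writing $\mathcal{R}_{MS}$ as the preimage of $(-\infty,1)$ under the resulting continuous function on $\mathbb{C}_-$ (invoking Fact \ref{f:MS:C-}). Your write-up merely spells out the seam-matching and composition-of-continuous-functions details that the paper leaves implicit.
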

\begin{proof}
    We use Fact \ref{f:MS:formula}. Note that when $a<0$ is fixed and $b\to 0$,
    $$1-\log (1-2a+a^2+b^2)+\frac{a^2+b^2}{|b|}\cdot \arctan \Bigl( \frac{|b|}{1-a} \Bigr) \to 1 +\frac{a^2}{1-a} - 2\log (1-a),$$
    cf. \eqref{eq:MS:interval}. Hence, the function $\displaystyle h\colon \mathbb{C}_-\ni z \mapsto \mathbb{E}\Bigl| 1+\frac{z}{1-z\tau_1} \Bigr|^2 \in \mathbb{R}$ is continuous and $\mathcal{R}_{MS}=h^{-1}\bigl((-\infty,1)\bigr)\subset \mathbb{C}_-$ (cf. Fact \ref{f:MS:C-}) is open.
\end{proof}

The mean-square stability region of the randomized implicit RK2 scheme is shown in Figure \ref{fig:regions} along with stability regions (mean-square and asymptotic) for the randomized explicit RK2 scheme \eqref{eq:explRK2} and absolute stability region of the deterministic midpoint scheme \eqref{eq:detRK2}. Note that the absolute stability region for deterministic implicit schemes \eqref{eq:detS1} and \eqref{eq:detS2}, not displayed in Figure \ref{fig:regions}, covers the entire left half-plane. The asymptotic stability and stability in probability regions of the randomized implicit RK2 scheme will be analyzed in section \ref{sec:AS}. As we will show, these regions also coincide with the left half-plane.

\begin{figure}
    \centering
    \includegraphics[scale=0.8]{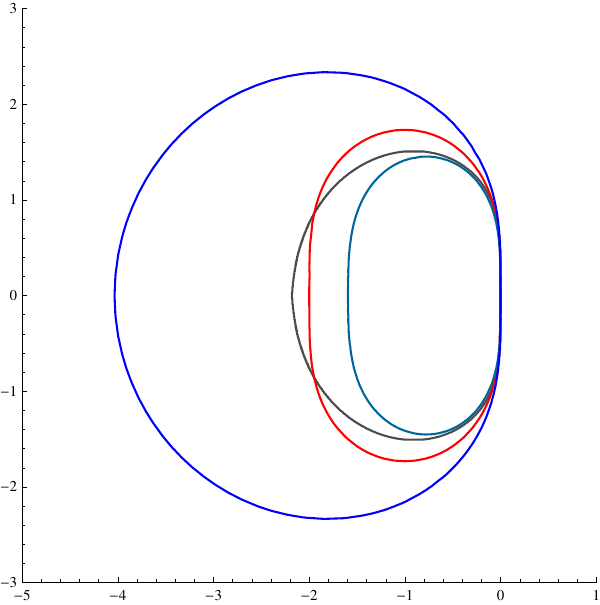}
    \caption{Contours of the stability regions for the RK2 scheme: \textcolor{red-new}{deterministic explicit}, \textcolor{gray-new}{randomized explicit (asymptotic) $\mathcal{R}^e_{AS}$}, \textcolor{light-blue}{randomized explicit (mean-square) $\mathcal{R}^e_{MS}$}, and \textcolor{dark-blue}{randomized implicit (mean-square) $\mathcal{R}_{MS}$}.}
    \label{fig:regions}
\end{figure}

\subsection{Regions of asymptotic stability and stability in probability} \label{sec:AS}

In this section, we show that the randomized implicit and semi-implicit RK2 schemes are asymptotically A-stable and A-stable in probability.

\begin{fact}
    \label{f:AS=SP}
    For schemes \eqref{eq:S1} and \eqref{eq:S2}, the notions of asymptotic stability and stability in probability are equivalent. Moreover, 
    \begin{equation}
    \label{eq:as-sp-1}
        \mathcal{R}_{AS} = \mathcal{R}_{SP} = \Bigl\{ z\in\mathbb{C}\colon \mathbb{E}\Bigl( \log \Bigl|  1+\frac{z}{1-z\tau_1} \Bigr| \Bigr) < 0 \Bigr\} = \mathbb{C}_-.
    \end{equation}
\end{fact}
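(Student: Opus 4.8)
The plan is to reduce the whole statement to the asymptotics of a one-dimensional random walk obtained from the product formula \eqref{eq:test_V1}. Since \eqref{eq:test_V1} gives the \emph{same} expression for both \eqref{eq:S1} and \eqref{eq:S2}, it suffices to analyse one of them. Write $w(t)=1+\frac{z}{1-tz}$ and $X_j=\log|w(\tau_j)|$; then \eqref{eq:test_V1} yields $\log|V^k|=\sum_{j=1}^k X_j=:S_k$, where the $X_j$ are i.i.d.\ because the $\tau_j$ are i.i.d.\ uniform on $[0,1]$. Thus $|V^k|=e^{S_k}$, and the behaviour of $V^k$ is controlled by the random walk $S_k$ via the strong law of large numbers, the sign of $\mu:=\mathbb{E}X_1=\int_0^1\log|w(t)|\,\mathrm{d}t$ being decisive. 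I would also record at the outset that $X_1$ is integrable for every $z$ (it is bounded off the positive real axis, and on it $\log|w|$ has only a logarithmic, hence integrable, singularity), so the strong law applies throughout.

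First I would pin down the sign of $\mu$. Using $w(t)=\frac{1+z-tz}{1-tz}$ and the substitution $s=1-t$ in the resulting numerator integral, one obtains the symmetric form
$$\mu=\int_0^1\log\frac{|1+tz|}{|1-tz|}\,\mathrm{d}t.$$
Since $|1+tz|^2-|1-tz|^2=4\,\Re(z)\,t$, the integrand has the same sign as $\Re(z)$ for every $t\in(0,1]$. Hence $\mu<0\iff\Re(z)<0$, $\mu=0\iff\Re(z)=0$, and $\mu>0\iff\Re(z)>0$; in particular the middle set of \eqref{eq:as-sp-1} equals $\mathbb{C}_-$, giving the second equality for free once the stability regions are identified.

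Next I would set up the inclusion chain $\mathbb{C}_-\subset\mathcal{R}_{AS}\subset\mathcal{R}_{SP}\subset\mathbb{C}_-$. The middle inclusion is the general fact that almost sure convergence implies convergence in probability. For the first, if $\Re(z)<0$ then $S_k/k\to\mu<0$ almost surely, so $S_k\to-\infty$ and $V^k\to0$ almost surely, giving $\mathbb{C}_-\subset\mathcal{R}_{AS}$. The last inclusion is equivalent to showing that every $z$ with $\Re(z)\ge0$ lies outside $\mathcal{R}_{SP}$. For $\Re(z)>0$ this is immediate from the strong law: $S_k/k\to\mu>0$ almost surely, so $\mathbb{P}(S_k>M)\to1$ for every $M$ and $V^k\not\to0$ even in probability. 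The point $z=0$ gives $V^k\equiv1$ and is excluded trivially.

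The delicate case, and the one I expect to be the main obstacle, is the boundary $\Re(z)=0$ with $z\neq0$, where $\mu=0$ and the strong law is inconclusive. Here one must exploit the fluctuations of the mean-zero walk $S_k$. I would first verify that $X_1$ is genuinely non-degenerate on the imaginary axis (for instance $|w(t)|^2=1+\frac{|z|^2(1-2t)}{1+|z|^2t^2}$ is manifestly non-constant in $t$), so that $S_k$ has finite, strictly positive variance. Then the central limit theorem gives $\mathbb{P}(S_k>0)\to\tfrac12$, whence $\mathbb{P}(|V^k|>1)\to\tfrac12$ and $V^k\not\to0$ in probability; thus $z\notin\mathcal{R}_{SP}$, and a fortiori $z\notin\mathcal{R}_{AS}$. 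Closing the chain yields $\mathcal{R}_{AS}=\mathcal{R}_{SP}=\{z:\mu<0\}=\mathbb{C}_-$, which is precisely \eqref{eq:as-sp-1}.
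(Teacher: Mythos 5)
Your proof is correct, but it follows a more self-contained route than the paper's. The paper does not run the random-walk argument itself: it invokes Lemma 5.1 of \cite{higham2} and Lemma 4 of \cite{randRK}, which together assert that both $\mathcal{R}_{AS}$ and $\mathcal{R}_{SP}$ coincide with $\bigl\{ z\colon \mathbb{E}\log\bigl|1+\tfrac{z}{1-z\tau_1}\bigr|<0 \bigr\}$ provided the second log-moment $\mathbb{E}\bigl(\log\bigl|1+\tfrac{z}{1-z\tau_1}\bigr|\bigr)^2$ is finite; the paper's actual work then consists of verifying this square-integrability (the only obstructions being logarithmic singularities of $t\mapsto\log|1-tz|$ for $z\in[1,\infty)$ and of $t\mapsto\log|1+z-tz|$ for $z\in(-\infty,-1]$) and computing the sign of the expectation, which it does essentially as you do, via the substitution $s=1-t$ and a comparison of $|1+tz|$ with $|1-tz|$. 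Your SLLN argument for $\Re(z)\neq 0$ and your CLT argument on the imaginary axis are precisely the content of the cited lemmas, so you have re-proved them rather than quoted them; what this buys is independence from the literature and visibly minimal hypotheses ($L^1$ of the log away from the boundary, and $L^2$ plus non-degeneracy only on the imaginary axis, where both are automatic because $\log|w|$ is there bounded and non-constant), whereas the paper buys brevity at the cost of checking the uniform second-moment condition. One small correction to your write-up: the parenthetical claim that $X_1$ is bounded for every $z$ off the positive real axis is false for $z\in(-\infty,-1]$, since the numerator $1+z-tz$ vanishes at $t=1+\tfrac{1}{z}\in[0,1]$ and $\log|w|$ then has a $-\infty$ singularity; this does not damage the argument, because that singularity is again logarithmic and hence integrable, which is all your SLLN step (and the sign computation of $\mu$) requires.
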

\begin{proof}
    The initial two equalities in \eqref{eq:as-sp-1} follow from \eqref{eq:regAS}, \eqref{eq:regSP}, Lemma 5.1 in \cite{higham2}, and Lemma 4 in \cite{randRK}. To use these lemmas, it suffices to check whether 
    \begin{align}
        \mathbb{E}\Bigl( \log \Bigl|  1+\frac{z}{1-z\tau_1} \Bigr| \Bigr)^2 & = \int\limits_0^1 \bigl( \log |1+z-tz| - \log |1-tz| \bigr)^2\,\mathrm{d}t \notag \\ & \leq 2 \int\limits_0^1 \bigl( \log |1+z-tz| \bigr)^2\,\mathrm{d}t + 2 \int\limits_0^1 \bigl( \log |1-tz| \bigr)^2\,\mathrm{d}t    \label{eq:ln-sq-1}
    \end{align}
    is finite. Note that $t\mapsto \log |1+z-tz|$ is either continuous for $t\in [0,1]$ or has exactly one singularity; the latter holds iff $z\in (-\infty,-1]$. Similarly, $t\mapsto \log |1-tz|$ is either continuous for $t\in [0,1]$ or has exactly one singularity; the latter holds iff $z\in [1,\infty)$. Moreover, it is known that 
    \begin{equation*}
        \int\limits_{-\varepsilon}^\varepsilon (\log |t|)^2\,\mathrm{d}t < \infty
    \end{equation*}
    for any $\varepsilon>0$. Thus, expression in \eqref{eq:ln-sq-1} is finite.

    Now we will show the last equality in \eqref{eq:as-sp-1}. Note that
    \begin{align*}
        \mathbb{E}\Bigl( \log \Bigl|  1+\frac{z}{1-z\tau_1} \Bigr|\Bigl) & = \int\limits_0^1 \log | 1+z(1-t)|\,\mathrm{d}t -\int\limits_0^1 \log | 1-zt |\,\mathrm{d}t \\ &  = \int\limits_0^1 \log | 1+zt|\,\mathrm{d}t -\int\limits_0^1 \log | 1-zt |\,\mathrm{d}t \\ & =  \int\limits_0^1 \log \sqrt{1+2\Re(z)t+|z|^2t^2}\,\mathrm{d}t -\int\limits_0^1 \log \sqrt{1-2\Re(z)t+|z|^2t^2}\,\mathrm{d}t.
    \end{align*}
    Thus, $\displaystyle \mathbb{E}\Bigl( \log \Bigl|  1+\frac{z}{1-z\tau_1} \Bigr| \Bigr) < 0$ if and only if $\Re(z)<0$. This completes the proof.
\end{proof}

\begin{remark} \label{rem:A-stab}
    Fact \ref{f:MS:bounded} and the last equality in \eqref{eq:as-sp-1} mean that schemes \eqref{eq:S1} and \eqref{eq:S2} are A-stable in the asymptotic stability sense and in the stability in probability sense but not in the mean square stability sense.
\end{remark}

\section{Numerical experiments} \label{sec:experiments}

This section is devoted to an empirical study of the stability of schemes \eqref{eq:S1} and \eqref{eq:S2}. The performance of both schemes will be compared to their deterministic counterparts \eqref{eq:detS1} and \eqref{eq:detS2}.

We consider the following test problem, brought in \cite{hairer} as an example of a stiff problem:
\begin{equation}
	\label{eq:num-test}
		\left\{ \begin{array}{ll}
			z'(t)= -50\cdot \bigl( z(t) - \cos t \bigr), \ t\geq 0, \\[2pt]
			z(0) = 1, 
		\end{array}\right. \tag{$T$}
\end{equation}
The exact solution of \eqref{eq:num-test} is
$$z(t) = \frac{1}{2501} \bigl( e^{-50t} + 2500\cdot \cos t + 50 \cdot \sin t \bigr).$$
This exact solution is displayed as a solid green line in Figures \ref{fig:stability-semiimpl} and \ref{fig:stability-impl}. Approximate solutions obtained by \eqref{eq:S1} and \eqref{eq:detS1} (in Figure \ref{fig:stability-semiimpl}) or by \eqref{eq:S2} and \eqref{eq:detS2} (in Figure \ref{fig:stability-impl}) are represented by the dashed black line. The red line represents the approximation error (the difference between exact and approximated solutions). All schemes are applied with three different step sizes: $h\in\bigl\{\frac12,\frac14,\frac18\bigr\}$. Since for randomized schemes the approximated solution is stochastic, for each considered $h$ we include three sample paths. We display the solution on the interval $[0,50]$ 

We run also deterministic and randomized explicit RK2 schemes for the test problem \eqref{eq:num-test}. The errors for these schemes exploded, reaching values of order $10^{200}$--$10^{300}$ on the interval $[0,50]$ for all $h\in\bigl\{\frac12,\frac14,\frac18\bigr\}$. As a result, charts for explicit schemes would not be informative and thus we do not include them.

We make the following observations and conclusions:
\begin{itemize}
    \item We have not observed meaningful differences in terms of stability between implicit and semi-implicit RK2 schemes (both in deterministic and randomized settings). This is consistent with the fact that stability regions of \eqref{eq:S1} and \eqref{eq:S2} coincide, similarly as stability regions of \eqref{eq:detS1} and \eqref{eq:detS2}. 
    \item Each of schemes \eqref{eq:S1}, \eqref{eq:S2}, \eqref{eq:detS1}, and \eqref{eq:detS2} significantly outperforms explicit schemes \eqref{eq:detRK2} and \eqref{eq:explRK2} in terms of stability. Deterministic implicit RK2 schemes \eqref{eq:detS1} and \eqref{eq:detS2} provide approximations matching the exact solution with good accuracy even for $h=\frac12$. For randomized implicit RK2 schemes, there are significant errors for $h=\frac12$ and $h=\frac14$ but their magnitude remains bounded. For $h=\frac18$, randomized implicit RK2 schemes perform comparably to their deterministic counterparts. At the same time, errors of explicit RK2 schemes (both deterministic and randomized) reach extremely high levels for all tested values of step size $h\in\bigl\{\frac12,\frac14,\frac18\bigr\}$ and they explode (they seem to be unbounded). Observations made in this bullet are consistent with theoretical results showing that stability regions for implicit schemes are larger than for explicit schemes (cf. \eqref{eq:ref1}, Figure \ref{fig:regions}, and Fact \ref{f:AS=SP}). 
    \item We suppose that there is a link between the observed behavior of approximated solutions obtained via randomized implicit RK2 schemes (i.e., significant but bounded errors with tendency to return to the proximity of $0$ after pikes) and the fact that these schemes are A-stable in the asymptotic sense but not A-stable in the mean-square sense (see Remark \ref{rem:A-stab}). Intuitively, asymptotic A-stability means that the approximated solution remains stable (i.e., the approximation error remains bounded) on almost every trajectory. This explains why approximated solutions in Figures \ref{fig:stability-semiimpl} and \ref{fig:stability-impl} do not explode. On the other hand, lack of mean-square A-stability may be related to local fluctuations of the error. However, currently this is only a conjecture that requires further investigation based on a more formal approach. 
\end{itemize}

 \begin{figure}[h]
    \centering
    \begin{subfigure}{0.3\textwidth}
    \centering
        \includegraphics[width=.95\linewidth]{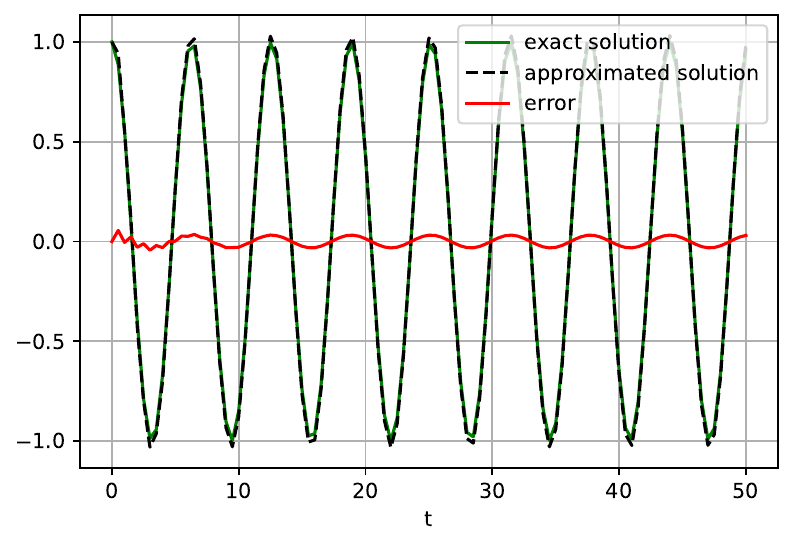}
        \subcaption{\eqref{eq:detS1}, $h=\frac12$}
    \end{subfigure}
    \begin{subfigure}{0.04\textwidth}
    \end{subfigure}
    \begin{subfigure}{0.3\textwidth}
    \centering
        \includegraphics[width=.95\linewidth]{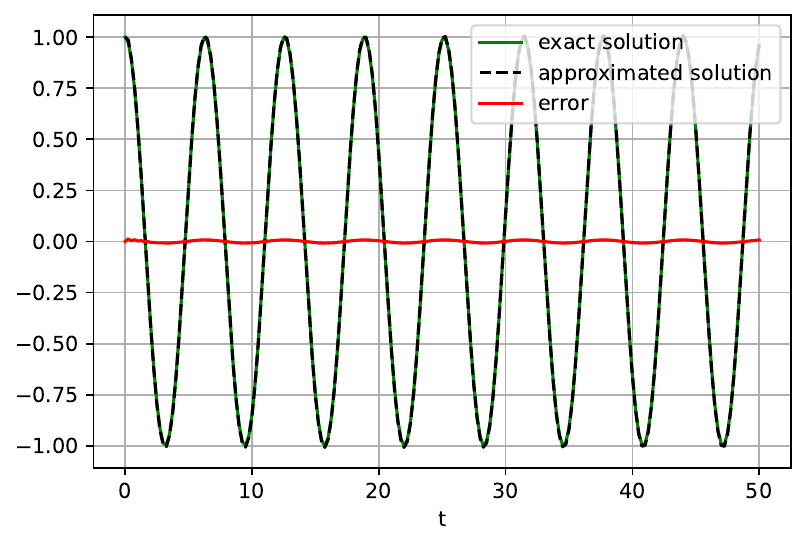}
        \subcaption{\eqref{eq:detS1}, $h=\frac14$}
    \end{subfigure}
    \begin{subfigure}{0.04\textwidth}
    \end{subfigure}
    \begin{subfigure}{0.3\textwidth}
    \centering
        \includegraphics[width=.95\linewidth]{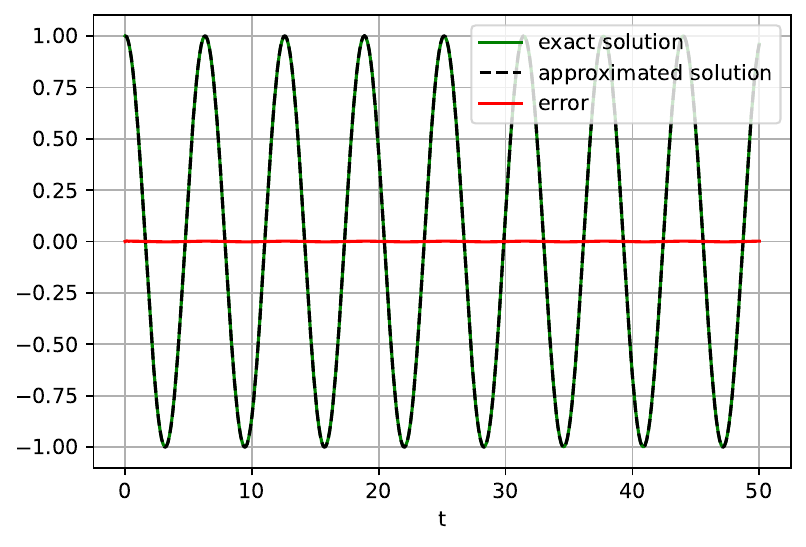}
        \subcaption{\eqref{eq:detS1}, $h=\frac18$}
    \end{subfigure}
    
    \begin{subfigure}{0.3\textwidth}
    \centering
        \includegraphics[width=.95\linewidth]{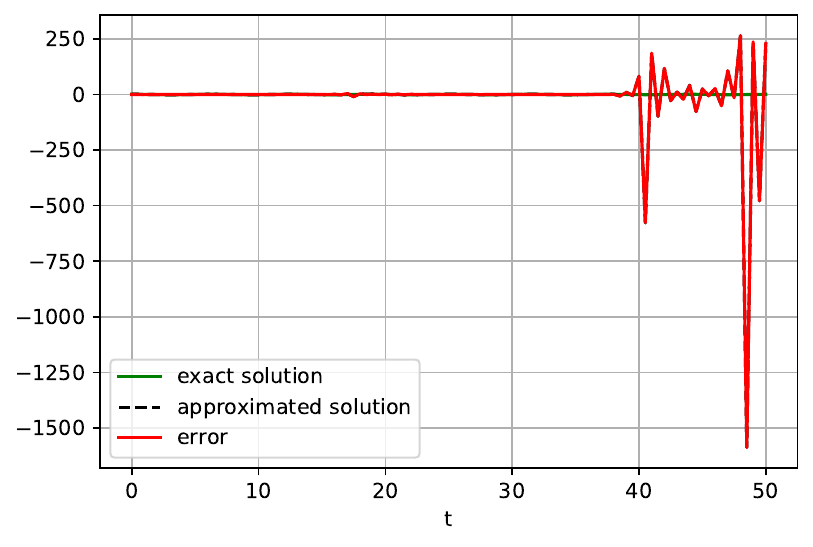}
        \subcaption{\eqref{eq:S1}, $h=\frac12$, path 1}
    \end{subfigure}
    \begin{subfigure}{0.04\textwidth}
    \end{subfigure}
    \begin{subfigure}{0.3\textwidth}
    \centering
        \includegraphics[width=.95\linewidth]{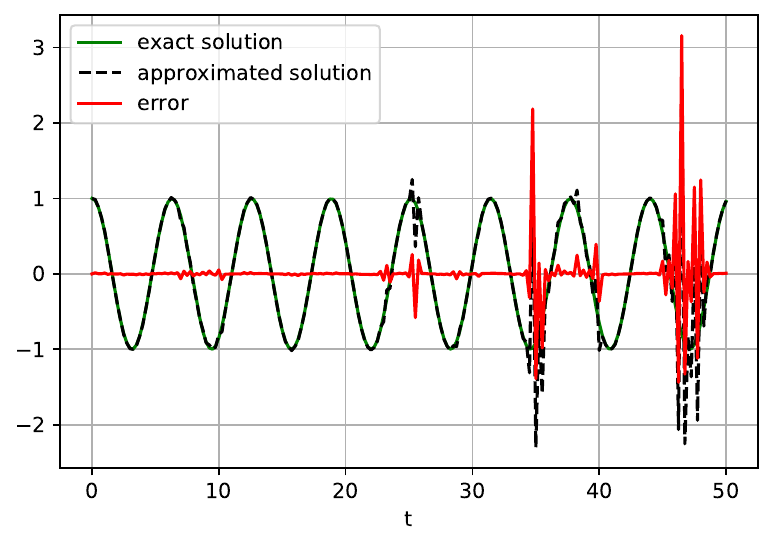}
        \subcaption{\eqref{eq:S1}, $h=\frac14$, path 1}
    \end{subfigure}
    \begin{subfigure}{0.04\textwidth}
    \end{subfigure}
    \begin{subfigure}{0.3\textwidth}
    \centering
        \includegraphics[width=.95\linewidth]{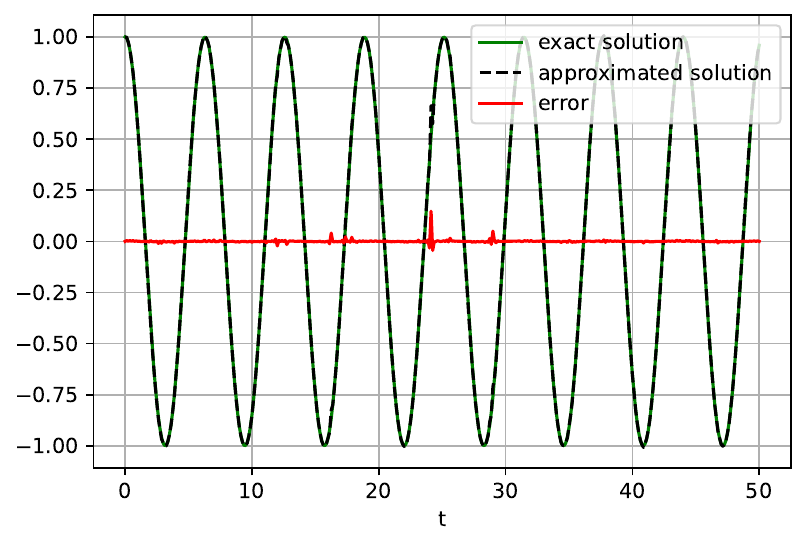}
        \subcaption{\eqref{eq:S1}, $h=\frac18$, path 1}
    \end{subfigure}
    
    \begin{subfigure}{0.3\textwidth}
    \centering
        \includegraphics[width=.95\linewidth]{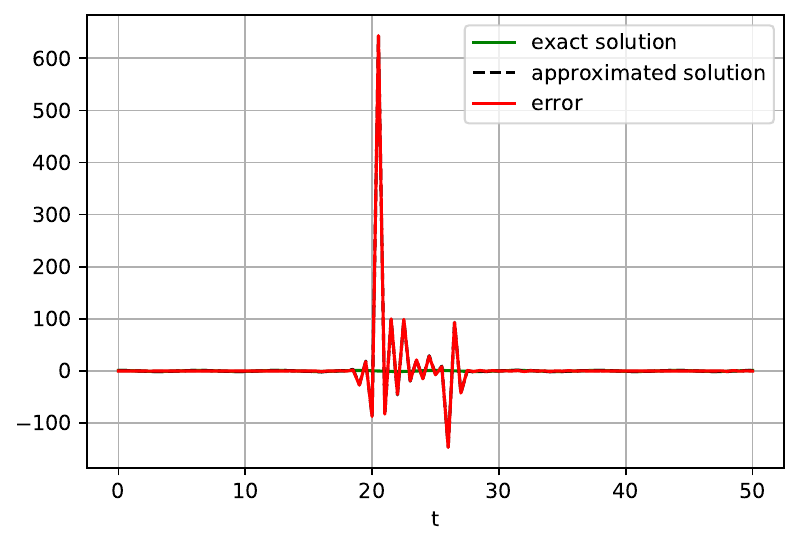}
        \subcaption{\eqref{eq:S1}, $h=\frac12$, path 2}
    \end{subfigure}
    \begin{subfigure}{0.04\textwidth}
    \end{subfigure}
    \begin{subfigure}{0.3\textwidth}
    \centering
        \includegraphics[width=.95\linewidth]{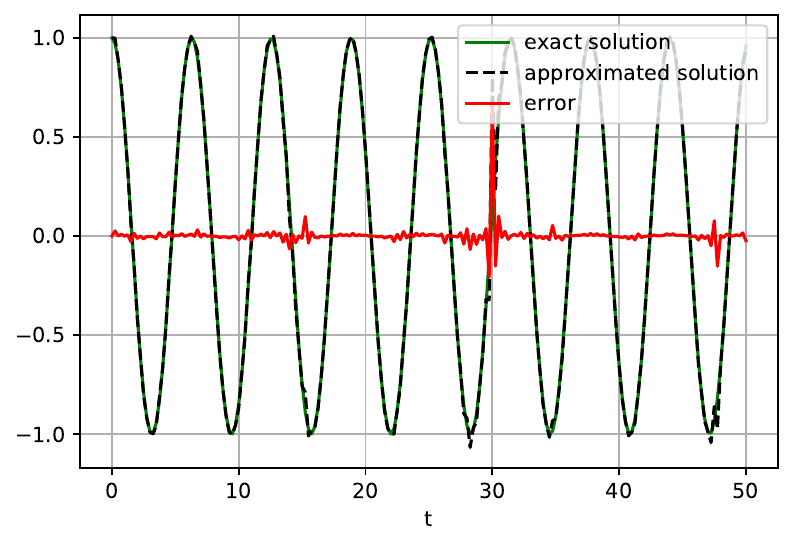}
        \subcaption{\eqref{eq:S1}, $h=\frac14$, path 2}
    \end{subfigure}
    \begin{subfigure}{0.04\textwidth}
    \end{subfigure}
    \begin{subfigure}{0.3\textwidth}
    \centering
        \includegraphics[width=.95\linewidth]{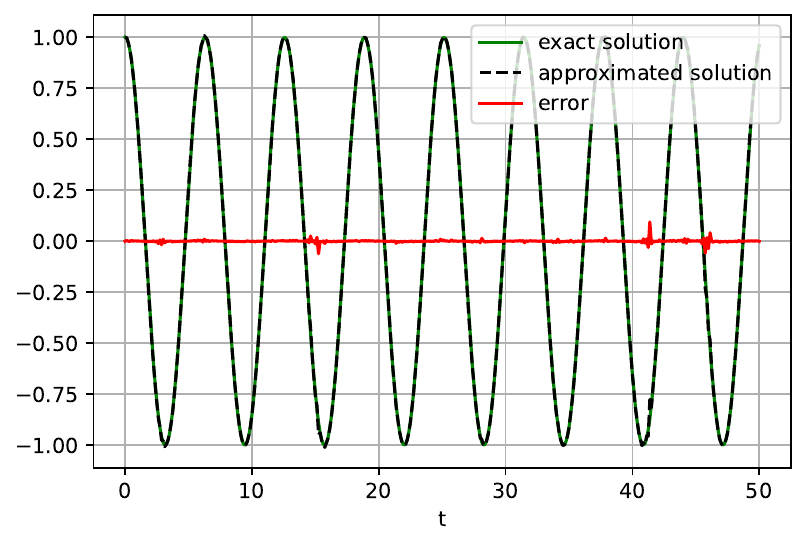}
        \subcaption{\eqref{eq:S1}, $h=\frac18$, path 2}
    \end{subfigure}
    
    \begin{subfigure}{0.3\textwidth}
    \centering
        \includegraphics[width=.95\linewidth]{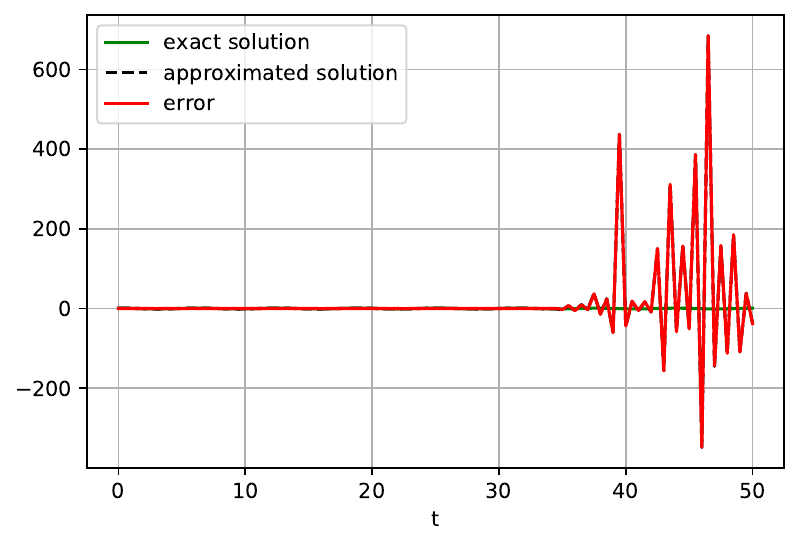}
        \subcaption{\eqref{eq:S1}, $h=\frac12$, path 3}
    \end{subfigure}
    \begin{subfigure}{0.04\textwidth}
    \end{subfigure}
    \begin{subfigure}{0.3\textwidth}
    \centering
        \includegraphics[width=.95\linewidth]{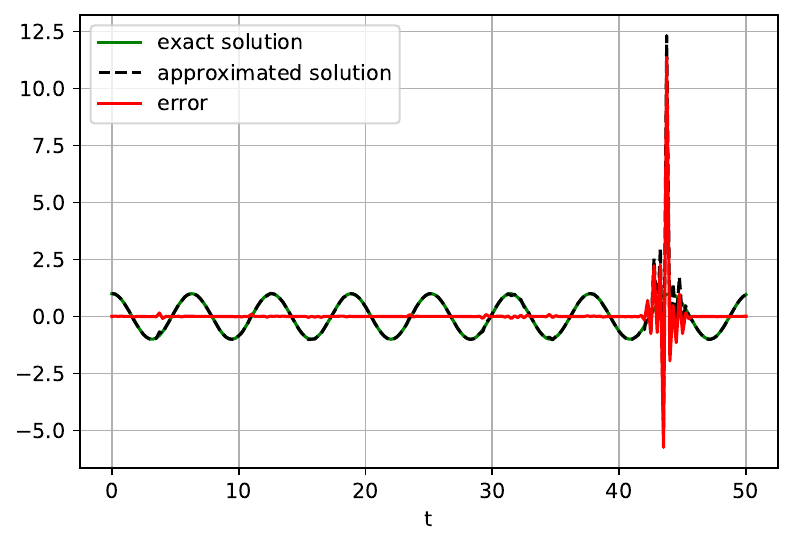}
        \subcaption{\eqref{eq:S1}, $h=\frac14$, path 3}
    \end{subfigure}
    \begin{subfigure}{0.04\textwidth}
    \end{subfigure}
    \begin{subfigure}{0.3\textwidth}
    \centering
        \includegraphics[width=.95\linewidth]{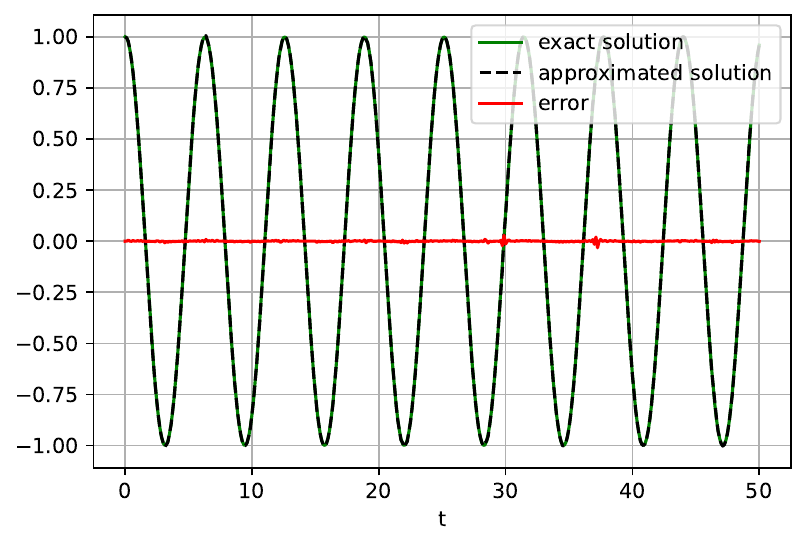}
        \subcaption{\eqref{eq:S1}, $h=\frac18$, path 3}
    \end{subfigure}
    \caption{Numerical solution of \eqref{eq:num-test} obtained via schemes \eqref{eq:detS1} and \eqref{eq:S1} for step size $h\in\bigl\{\frac12,\frac14,\frac18\bigr\}$.}
    \label{fig:stability-semiimpl}
\end{figure}

 \begin{figure}[h]
    \centering
    \begin{subfigure}{0.3\textwidth}
    \centering
        \includegraphics[width=.95\linewidth]{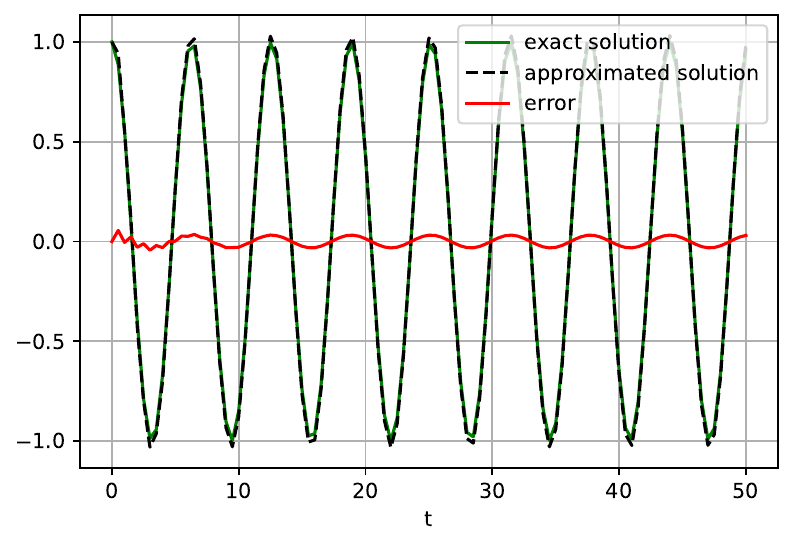}
        \subcaption{\eqref{eq:detS2}, $h=\frac12$}
    \end{subfigure}
    \begin{subfigure}{0.04\textwidth}
    \end{subfigure}
    \begin{subfigure}{0.3\textwidth}
    \centering
        \includegraphics[width=.95\linewidth]{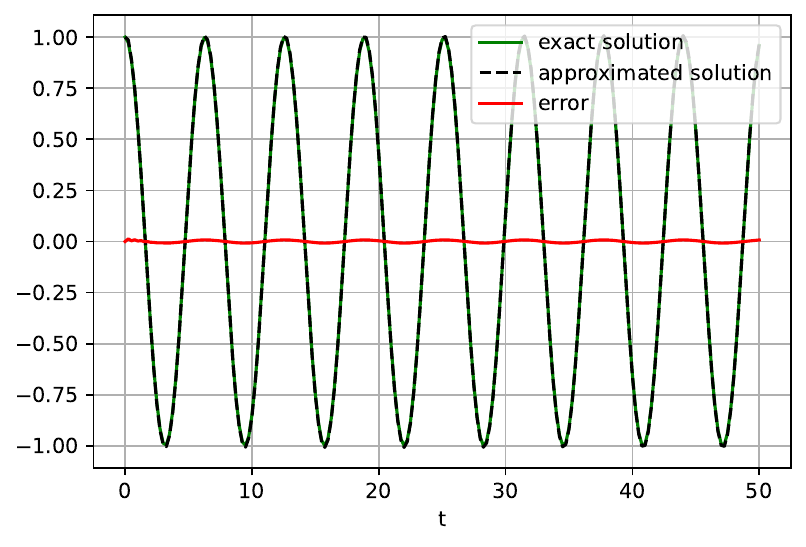}
        \subcaption{\eqref{eq:detS2}, $h=\frac14$}
    \end{subfigure}
    \begin{subfigure}{0.04\textwidth}
    \end{subfigure}
    \begin{subfigure}{0.3\textwidth}
    \centering
        \includegraphics[width=.95\linewidth]{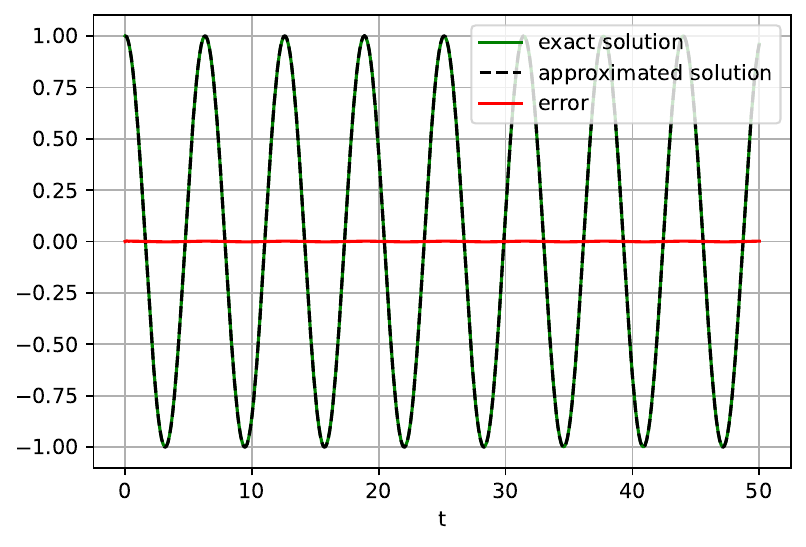}
        \subcaption{\eqref{eq:detS2}, $h=\frac18$}
    \end{subfigure}
    
    \begin{subfigure}{0.3\textwidth}
    \centering
        \includegraphics[width=.95\linewidth]{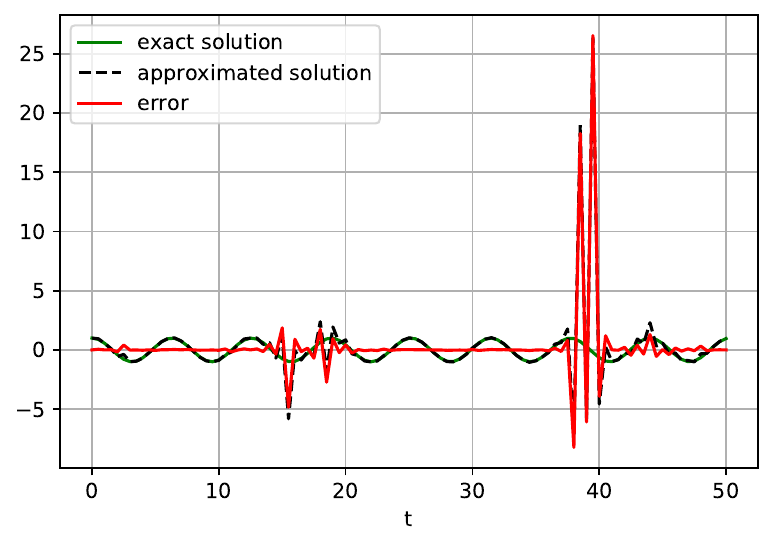}
        \subcaption{\eqref{eq:S2}, $h=\frac12$, path 1}
    \end{subfigure}
    \begin{subfigure}{0.04\textwidth}
    \end{subfigure}
    \begin{subfigure}{0.3\textwidth}
    \centering
        \includegraphics[width=.95\linewidth]{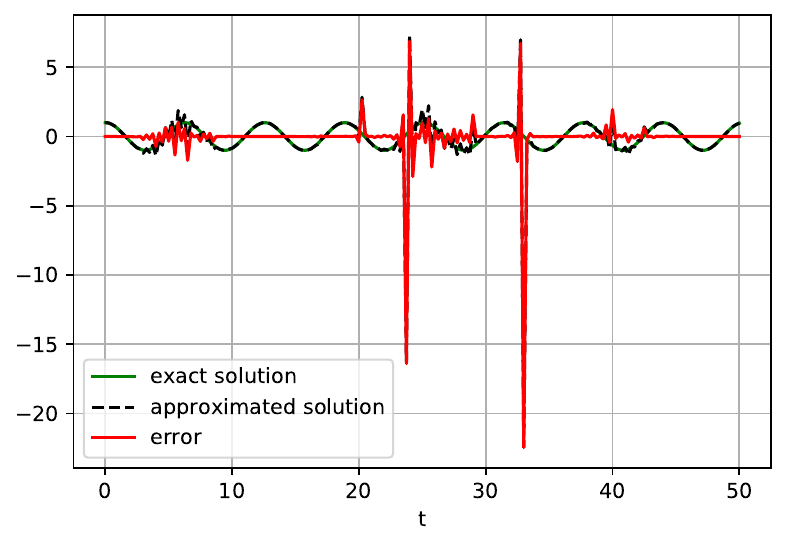}
        \subcaption{\eqref{eq:S2}, $h=\frac14$, path 1}
    \end{subfigure}
    \begin{subfigure}{0.04\textwidth}
    \end{subfigure}
    \begin{subfigure}{0.3\textwidth}
    \centering
        \includegraphics[width=.95\linewidth]{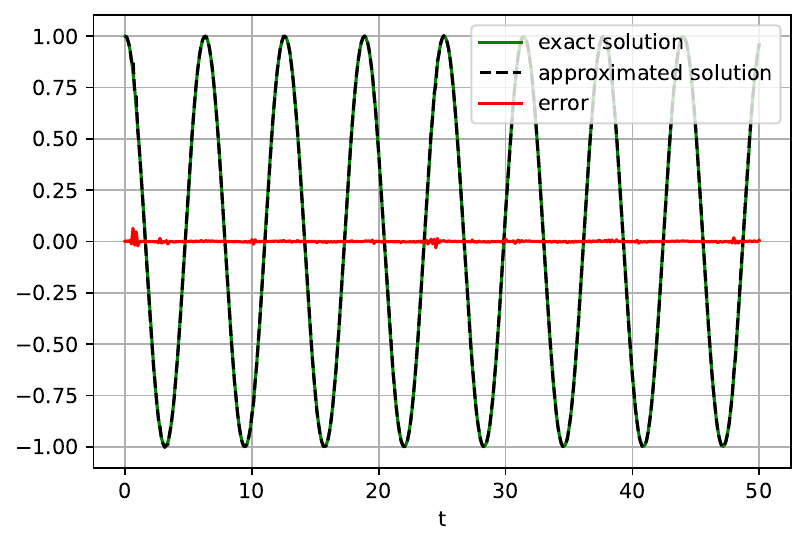}
        \subcaption{\eqref{eq:S2}, $h=\frac18$, path 1}
    \end{subfigure}
    
    \begin{subfigure}{0.3\textwidth}
    \centering
        \includegraphics[width=.95\linewidth]{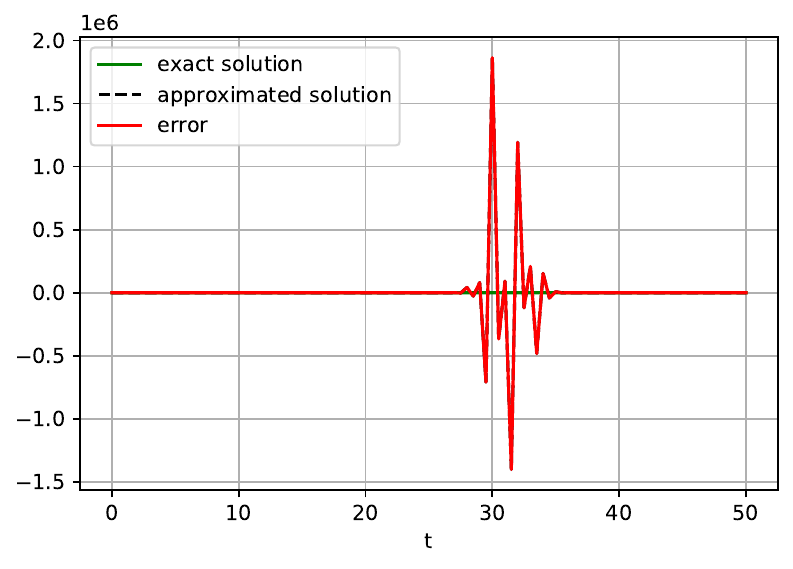}
        \subcaption{\eqref{eq:S2}, $h=\frac12$, path 2}
    \end{subfigure}
    \begin{subfigure}{0.04\textwidth}
    \end{subfigure}
    \begin{subfigure}{0.3\textwidth}
    \centering
        \includegraphics[width=.95\linewidth]{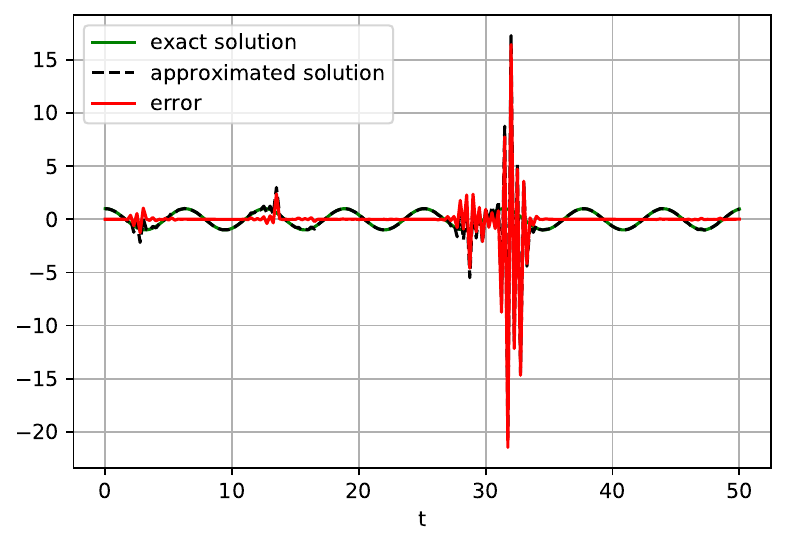}
        \subcaption{\eqref{eq:S2}, $h=\frac14$, path 2}
    \end{subfigure}
    \begin{subfigure}{0.04\textwidth}
    \end{subfigure}
    \begin{subfigure}{0.3\textwidth}
    \centering
        \includegraphics[width=.95\linewidth]{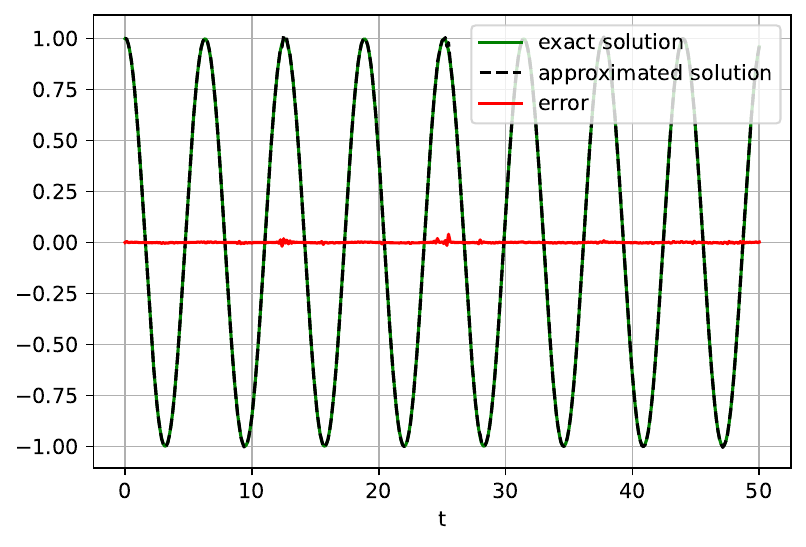}
        \subcaption{\eqref{eq:S2}, $h=\frac18$, path 2}
    \end{subfigure}
    
    \begin{subfigure}{0.3\textwidth}
    \centering
        \includegraphics[width=.95\linewidth]{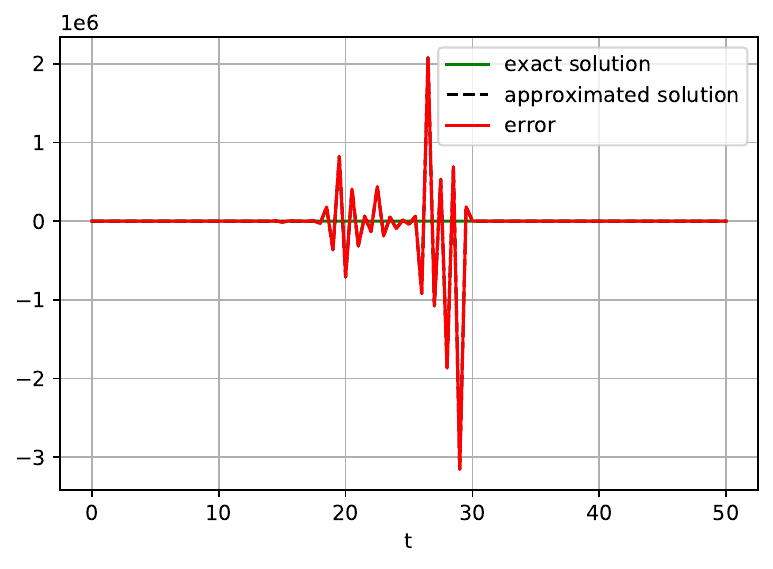}
        \subcaption{\eqref{eq:S2}, $h=\frac12$, path 3}
    \end{subfigure}
    \begin{subfigure}{0.04\textwidth}
    \end{subfigure}
    \begin{subfigure}{0.3\textwidth}
    \centering
        \includegraphics[width=.95\linewidth]{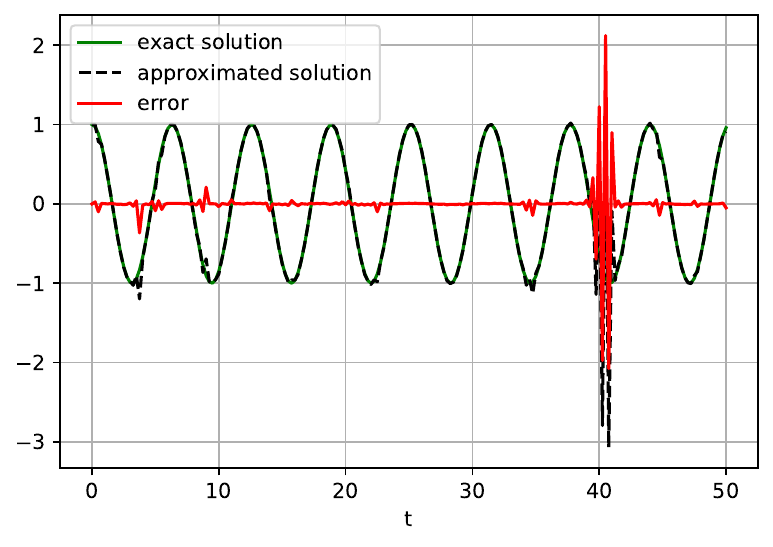}
        \subcaption{\eqref{eq:S2}, $h=\frac14$, path 3}
    \end{subfigure}
    \begin{subfigure}{0.04\textwidth}
    \end{subfigure}
    \begin{subfigure}{0.3\textwidth}
    \centering
        \includegraphics[width=.95\linewidth]{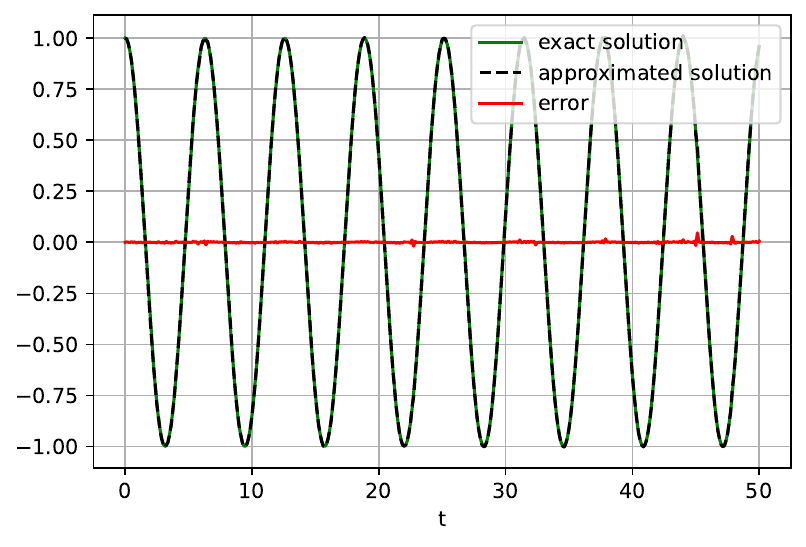}
        \subcaption{\eqref{eq:S2}, $h=\frac18$, path 3}
    \end{subfigure}
    \caption{Numerical solution of \eqref{eq:num-test} obtained via schemes \eqref{eq:detS2} and \eqref{eq:S2} for step size $h\in\bigl\{\frac12,\frac14,\frac18\bigr\}$.}
    \label{fig:stability-impl}
\end{figure}

\section{Conclusions and future work} \label{sec:conclusions}

In this paper, we have analyzed two randomized implicit schemes from the two-stage Runge-Kutta family. We have shown that they achieve the rate of convergence known for the randomized explicit RK2 scheme (Theorem \ref{theorem-error-1} and Theorem \ref{theorem-error-2}) which is by $1/2$ better than for analogous deterministic schemes. We have described probabilistic stability regions and proven that the schemes are A-stable asymptotically (Fact \ref{f:AS=SP}) but not in the mean-square sense (Fact \ref{f:MS:bounded}). Finally, via numerical experiments, we assessed the stability of the proposed schemes and compared their performance to similar deterministic and explicit schemes.

In our future work, we plan to investigate further extensions of the results obtained so far. In particular, we plan to analyze stability regions defined in the $L^p(\Omega)$ sense for all $p\in [1,+\infty)$ and to investigate their potential links to asymptotic stability. Moreover, the construction of randomized schemes of higher order, their errors, and stability properties are also of interest.
\ 

\bf Acknowledgments. \rm This research was funded in whole or in part by the National Science Centre, Poland, under project 2021/41/N/ST1/00135. 

This preprint has not undergone peer review or any post-submission improvements or corrections. The Version of Record of this article is published in BIT Numerical Mathematics, and is available online at https://doi.org/10.1007/s10543-024-01050-9.

%%%%%%%%%%%%%%%%%


\begin{thebibliography}{22}

\bibitem{taylor}
T. Bochacik, A note on the probabilistic stability  of randomized Taylor schemes, {\em Electron. Trans. Numer. Anal.} {\bf 58} (2023), 101--114.

\bibitem{randRK}
T. Bochacik, M. Goćwin, P. M. Morkisz, P. Przybyłowicz, Randomized Runge-Kutta method -- Stability and convergence under inexact information, {\em J. Complex.} {\bf 65} (2021), 101554.

\bibitem{randEuler}
T. Bochacik, P. Przybyłowicz, On the randomized Euler schemes for ODEs under inexact information, {\em Numer. Algorithms} {\bf 91} (2022), 1205--1229.

\bibitem{butcher}
J. C. Butcher, {\em Numerical Methods for Ordinary Differential Equations}, 2nd ed., Wiley, 2008. 

\bibitem{dahlquist}
G. Dahlquist, A special stability problem for linear multistep methods, {\em BIT Numer. Math.} {\bf 3} (1963), 27--43.

\bibitem{ambrosio}
R. D'Ambrosio, \emph{Numerical Approximation of Ordinary Differential Problems. From Deterministic to Stochastic Numerical Methods}, UNITEXT, Springer, 2023.

\bibitem{daun1}
T. Daun, On the randomized solution of initial value problems, {\em J. Complex.} \bf 27 \rm (2011), 300--311.

\bibitem{backward_euler} M. Eisenmann, M. Kov\'{a}cs, R. Kruse, S. Larsson, On a randomized backward Euler method for nonlinear evolution equations with time-irregular coefficients, {\em Found. Comp. Math.} {\bf 19} (2019), 1387--1430.

\bibitem{hairer}
E. Hairer and G. Wanner, \emph{Solving Ordinary Differential Equations II, Stiff and Differential-
Algebraic Problems, 2nd ed.}, Springer-Verlag, Berlin, 1996.

\bibitem{HeinMilla} 
S. Heinrich, B. Milla, The randomized complexity of initial value problems, {\em J. Complex.} \bf 24 \rm (2008), 77--88.

\bibitem{higham1}
D.J. Higham, A-stability and stochastic mean-square stability, {\em BIT  Numer. Math.} {\bf 40} (2000), 404--409.

\bibitem{higham2}
D. J. Higham, Mean-square and asymptotic stability of the stochastic theta method, {\em Siam J. Numer. Anal.} {\bf 38} (2000), 753--769.

\bibitem{JenNeuen}
A. Jentzen, A. Neuenkirch, A random Euler scheme for Carath\'eodory differential equations, {\em J. Comp. and Appl. Math.} \bf 224 \rm (2009), 346--359.

\bibitem{Kac1}
B. Kacewicz, Almost optimal solution of initial-value problems by randomized and quantum algorithms, {\em J. Complex.} \bf 22 \rm
(2006), 676--690.

\bibitem{KruseWu_1} 
R. Kruse, Y. Wu, Error analysis of randomized Runge–Kutta
methods for differential equations with time-irregular coefficients, {\em Comput. Methods Appl. Math.}, {\bf 17} (2017), 479--498.

\bibitem{mitsui}
T. Mitsui and Y. Saito, Stability Analysis of Numerical Schemes for Stochastic Differential Equations, {\em SIAM J. Numer. Anal.}, {\bf 33} (1996), 2254--2267.

\bibitem{stengle1}
G. Stengle, Numerical methods for systems with measurable coefficients, {\em Appl. Math. Lett.} {\bf 3} (1990), 25--29.

\bibitem{stengle2}
G. Stengle, Error analysis of a randomized numerical method, {\em Numer. Math.} {\bf 70} (1995) 119--128.

\end{thebibliography}
\end{document}